
\documentclass[11pt]{amsart}%
\usepackage{amssymb,amsfonts,latexsym}
\usepackage{graphics,verbatim}
\usepackage{graphicx}
\usepackage[usenames]{color}
\usepackage{amsmath}
\usepackage{amsfonts}
\usepackage{amssymb}
\usepackage{todonotes}%
\setcounter{MaxMatrixCols}{30}
\providecommand{\U}[1]{\protect\rule{.1in}{.1in}}
\newtheorem{theorem}{Theorem}[section]
\newtheorem{lem}[theorem]{Lemma}
\newtheorem{lemma}[theorem]{Lemma}
\newtheorem{proposition}[theorem]{Proposition}

\theoremstyle{definition}
\newtheorem{definition}[theorem]{Definition}
\newtheorem{example}[theorem]{Example}

\theoremstyle{remark}
\newtheorem{remark}[theorem]{Remark}
\newtheorem{ques}{Question}

\numberwithin{equation}{section}

\setlength{\textwidth}{15.0cm}
\setlength{\textheight}{22.0cm}
\hoffset=-1cm
\pagestyle {plain}

\newcommand{\R}{{\mathbb R}}

\begin{document}
\title{On exponential bases and frames with non-linear phase functions and some applications}
\author{Jean-Pierre Gabardo}
\author{Chun-Kit Lai}
\author{Vignon Oussa}
\address{[Jean-Pierre Gabardo] Department of Mathematics, McMaster University, Hamilton, Canada.}
\email{gabardo@mcmaster.ca}

\address{[Chun-Kit Lai] Department of Mathematics, San Francisco State University, San Francisco, US.}
\email{cklai@sfsu.edu}
\address{[Vignon Oussa] Department of Mathematics, Bridgewater State University
Massachusetts, US.}
\email{vignon.oussa@bridgew.edu}
\subjclass[2010]{42C15}
\keywords{Expoential bases, frames, and non-linear phase functions}
\maketitle

\begin{abstract}
In this paper, we study the spectrality and frame-spectrality of exponential
systems of the type $E(\Lambda,\varphi) = \{e^{2\pi i \lambda\cdot\varphi(x)}:
\lambda\in\Lambda\}$ where the phase function $\varphi$ is a Borel measurable
which is not necessarily linear. A complete characterization of pairs
$(\Lambda,\varphi)$ for which $E(\Lambda,\varphi)$ is an orthogonal basis or a
frame for $L^{2}(\mu)$ is obtained. In particular, we show that the
middle-third Cantor measures and the unit disc, each admits an orthogonal
basis with a certain non-linear phase. Under a natural regularity condition on
the phase functions, when $\mu$ is the Lebesgue measure on $[0,1]$ and
$\Lambda= {\mathbb{Z}},$ we show that only the standard phase functions
$\varphi(x) = \pm x$ are the only possible functions that give rise to
orthonormal bases. Surprisingly, however we prove that there exist a greater
degree of flexibility, even for continuously differentiable phase functions in
higher dimensions. For instance, we were able to describe a large class of
functions $\varphi$ defined on ${\mathbb{R}}^{d}$ such that the system
$E(\Lambda,\varphi)$ is an orthonormal basis for $L^{2}[0,1]^{d}$ when
$d\geq2.$ Moreover, we discuss how our results apply to the discretization
problem of unitary representations of locally compact groups for the
construction of orthonormal bases. Finally, we conclude the paper by stating
several open problems.

\end{abstract}


\section{Introduction}

\subsection{Definition and Background.}

Let $\Lambda$ be a countable set in ${\mathbb{R}}^{d}$. We denote by
$E(\Lambda): = \{e^{2\pi i \lambda\cdot x}:\lambda\in\Lambda\}$ the collection
of exponential functions with frequencies in $\Lambda.$ For a fixed positive
Borel measure $\mu$ defined on a subset $\Omega$ of $\mathbb{R}^{d},$ perhaps
one of the most profound and largely unresolved questions in harmonic analysis
is to characterize pairs of the type $\left( \Omega,\Lambda\right) $ such that
$E(\Lambda)$ is either an orthogonal basis or a Riesz basis or a frame for
$L^{2}\left( \Omega,d\mu\right) .$ Although the literature contains several
significant results, which we summarized below, at this point, this problem
remains unresolved even in a variety of concrete cases.

\medskip

First, we recall that a finite Borel measure is a \textit{spectral measure} if
there exists a countable discrete set $\Lambda$ called the \textit{spectrum},
such that $E(\Lambda)$ forms an orthogonal basis for $L^{2}(\mu)$.
Additionally, if there exists a countable set $\Lambda$ such that $E(\Lambda)$
forms an orthogonal basis for $L^{2}(\Omega)$, we say that $\Omega$ is a
\textit{spectral set} (equivalently, the measure $\chi_{\Omega}\,dx$ is a
spectral measure). The problem of characterizing spectral sets was initiated
by Fuglede \cite{Fug74}, who conjectured in 1974 that a set is spectral if and
only if one can tile the whole Euclidean space $\mathbb{R}^{d}$ using
translates of that set. Although Fuglede's conjecture in its full generality
was disproved by Tao \cite{Tao2004} in 2004, the problem remains open in many
special settings. As a partial solution to this conjecture, Lev and Matolcsi
recently settled affirmatively the case where $\Omega$ is a convex set
\cite{LM2019}. Moreover, Jorgensen and Pedersen \cite{JP1998} were the first
to discover spectral measures that are singularly continuous with respect to the Lebesgue meaure. They found that the
middle-fourth Cantor measure is a spectral measure, while the middle-third
Cantor measure is not. The reader interested in learning more about fractal
spectral measures can consult the following survey \cite{DLW2017} authored by
Dutkay, Lai, and Wang.

\medskip

\medskip

Next, let us mention the notion of frame, which generalizes the notion of
basis, in a separable Hilbert space $\mathcal{H}$ and were first introduced by
Duffin and Schaeffer \cite{DS1952} in order to deal with nonharmonic Fourier
series. Given a countable index set $\mathcal{N}$, a collection of vectors
$\{x_{n}\}_{n\in\mathcal{N} }$ in $\mathcal{H}$ is called a \textit{frame} if
there exist positive constants $A,B$ such that
\[
A\, \|x\|^{2}\le\sum_{n\in\mathcal{N}}\, |\langle x, x_{n}\rangle|^{2}\le
B\,\|x\|^{2},\quad x\in\mathcal{H}.
\]
In that case, every element $x$ of $\mathcal{H}$ admits an unconditional
expansion $x=\sum_{n\in\mathcal{N}} \,c_{n}\,x_{n}$, in terms of the frame
elements $x_{n}$, for certain coefficients $c_{n}$.  In the particular case
where $\mathcal{H}=L^{2}(\mu)$ and $\{x_{n}\}_{n\in\mathcal{N} }=E(\Lambda)$
(where each exponential in $E(\Lambda)$ is viewed as an element of $L^{2}%
(\mu)$ in the obvious way), we call $E(\Lambda)$ a \textit{Fourier frame} for
$L^{2}(\mu)$ if the inequalities above hold. Thus, more explicitly, this means
that there exist constants $0<A\le B<\infty$ such that
\begin{equation}
\label{Eq_Frame}A \|f\|_{L^{2}(\mu)}^{2} \le\sum_{\lambda\in\Lambda} \left|
\int f(x)e^{-2\pi i \lambda\cdot x} d\mu(x)\right|  ^{2} \le B\|f\|_{L^{2}%
(\mu)}^{2}, \ \forall f\in L^{2}(\mu).
\end{equation}
When such a frame exists, we call $\mu$ a \textit{frame-spectral measure} and
$\Lambda$ a \textit{frame-spectrum}. Fourier frames provide thus, for each
$f\in L^{2}(\mu)$, a basis-like expansion of the type
\[
f = \sum_{\lambda\in\Lambda} c_{\lambda} e^{2\pi i \lambda\cdot x}%
\]
for some (possibly non-unique) square-summable sequence $(c_{\lambda
})_{\lambda\in\Lambda}.$ The redundancy property built into the structure of
frames makes the expansion of vectors described above robust to the loss of
data, and as such, frames are very useful in signal transmission. In cases
where the coefficients used to represent vectors are unique, the system
$E(\Lambda)$ is called a \textit{Riesz basis} and $\mu$ a
\textit{Riesz-spectral measures}. We refer readers interested in a detailed
treatment of the theory of frames and Riesz bases to the monographs of Heil
and Christensen \cite{Ole,heil2010basis}.

Regarding the existence of exponential frames and Riesz bases, there is a
general interest in determining measures that are Riesz-spectral and
frame-spectral. A theory addressing these issues was established in
\cite{HLL}, and the existence of exponential Riesz basis on a finite union of
rectangles on ${\mathbb{R}}^{d}$ was recently confirmed by Kozma and Nitzan
\cite{KN2015, KN2016}. It is worth noting that despite the intense efforts
devoted to this line of research, to this date, several concrete cases are
still far from being settled. For instance, the following questions have yet
to be resolved (see \cite{Str2000,DLW2017})

\begin{enumerate}
\item[(i)] Does the middle-third Cantor measure admit a Fourier frame?

\item[(ii)] Do a triangle or a disk admit an exponential Riesz basis?
\end{enumerate}

\subsection{Exponential functions with non-linear phases}

Our main goal here is to consider a different question regarding frames
generated by measures, which we will always assumed to be bounded.

\medskip Let $\mu$ be a finite Borel measure with closed support $K_{\mu}$.
Let $\varphi$ be a Borel measurable function defined on a set containing
$K_{\mu}$ and $\Lambda$ be a countable and discrete set. We define the
collection of generalized exponentials with phase functions $\varphi$ as
\[
E(\Lambda,\varphi) = \left\{  e^{2\pi i \lambda\cdot\varphi(x)}: \lambda
\in\Lambda\right\}  .
\]

We aim to address the following question:

\smallskip

\begin{ques}
\label{question} Let $\mu$ be a finite Borel measure on ${\mathbb{R}}^{d}$
with support $K_{\mu}$. Can we characterize the pairs $(\Lambda, \varphi)$ for
which the system $E(\Lambda,\varphi)$ is an orthonormal basis, or a frame for
$L^{2}\left(  \mu\right)  ?$
\end{ques}

\smallskip

Note that $E(\Lambda,\varphi)$ forms a frame if and only if (\ref{Eq_Frame})
holds when the exponential function $e^{2\pi i \lambda\cdot x}$ is replaced
with $e^{2\pi i \lambda\cdot\varphi(x)}.$ The motivation of the work is
manifold. On the one hand, we are are interested in the flexibility afforded
by $\varphi$ in forming exponential bases/frames for $L^{2}(\mu)$. On the
other hand, complex exponential systems with non-linear phases arise naturally
in the realization of unitary representations of non-commutative Lie groups.
Such systems have been investigated from a representation-theoretic viewpoint
in
\cite{grochenig2017orthonormal,oussa2018frames,oussa2017regular,oussa2018framesI}%
. The overarching theme in these projects is concerned with the question of
discretizing a unitary (irreducible) representation of a locally compact group
for the construction of frames and orthonormal bases. These questions are
deeply connected to wavelet theory, and time-frequency analysis
\cite{BeTa,Ole,Gr} and have been studied with varying degrees of generality
\cite{Gr,grochenig2017orthonormal,oussa2018frames,oussa2018framesI,MR809337}.

\subsection{Main Results and organization}

This paper aims to present a mathematical framework on the spectrality and
frame-spectrality of $E(\Lambda,\varphi)$ for a general finite Borel measure
$\mu$. Our main result gives a complete characterization of pairs
$(\Lambda,\varphi)$ for which the system of vectors $E(\Lambda,\varphi)$ forms
a frame or a basis for $L^{2}(\mu).$ The work is organized as follows.

\medskip

In the second section, we define the concept of essential injective Borel
measurable functions. This notion plays a central role in the proof of our
main result, which is stated in Theorem \ref{theorem_basis_frame}.

\medskip

In Section 4, we will illustrate our main result with some specific examples.
In particular, we show that the middle-third Cantor measure admits an
orthonormal basis of the form $E(\Lambda,\varphi)$ for some Borel-measurable
function $\varphi$. Additionally, we also establish that the unit disc also
admits an orthonormal basis of the type $(\Lambda,\varphi)$ where $\varphi$ is
a piecewise continuously differentiable phase function. These examples clearly
illustrate how allowing the presence of $\varphi$ in the ``generalized''
exponentials  we consider greatly simplify the problem of constructing
orthonormal bases or frames in this setting.

\medskip

In Section 5, we consider the problem of determining which continuous function
$\varphi$ have the property that the system $E({\mathbb{Z}}, \varphi)$ is an
orthonormal basis for $L^{2}[0,1]$. We show that if $\varphi$ preserves
measure zero sets, the only such $\varphi$ are exactly $\varphi(x) = \pm x$.

\medskip

In Section 6, we show that, in the multidimensional setting, we cannot expect
results analogous to those obtained in the one-dimensional case. We also
describe in Theorem \ref{unipotent} a fairly large class of functions
$\varphi$ such that $E({\mathbb{Z}}^{d},\varphi)$ is an orthonormal basis for
$L^{2}[0,1]^{d}$. When $d=2$, we present sufficient conditions under which we
cannot improve the construction given in Theorem \ref{unipotent}. \medskip

In Section 7, we explore applications to the discretization of unitary
representations of a class of Lie groups for the construction of orthonormal
bases
\cite{Gr,grochenig2017orthonormal,oussa2018frames,oussa2017regular,oussa2018framesI,MR809337}%
.

\medskip

Finally, in Section 8, we conclude our paper by stating several open
questions.

\medskip

\section{Essential injectivity}

In this section, we define and discuss the property of essential injectivity
of a Borel map. Let $\mu$ be a finite Borel measure on ${\mathbb{R}}^{d}$ with
support $K_{\mu}$.  Given a Borel measurable map $\varphi:K_{\mu
}\rightarrow\mathbb{R}^{d}$, the pushforward measure of $\mu$ under $\varphi$
is characterized by the property
\[
\varphi_{\ast}\mu(E)=\mu(\varphi^{-1}(E)),\quad E\subset{\mathbb{R}}^{d},
\quad E\ \mbox{Borel}.
\]
Moreover, integration with respect to the measure $\varphi_{\ast}\mu$
satisfies the following formula.
\[
\int f(y) d(\varphi_{\ast}\mu)(y) = \int f(\varphi(x))d\mu(x), \quad f\in
C_{0}( {\mathbb{R}}^{d}),
\]
where $C_{0}( {\mathbb{R}}^{d})$ denotes the space of continuous functions on
${\mathbb{R}}^{d}$ that vanish at infinity.
The concept of essential injectivity as defined below plays a central role in
this work.

\begin{definition}
Given a finite Borel measure $\mu$ with support $K_{\mu},$ a Borel
measurable function $\varphi:K_{\mu}\rightarrow{\mathbb{R}}^{d}$ is said to be
\textit{$\mu$-essentially injective} if for any $f\in L^{2}(\mu)$, there exists
$h\in L^{2}(\varphi_{\ast}\mu)$ such that $f(x)=h(\varphi(x))$ $\mu$-a.e.
\end{definition}

We will record some conditions equivalent to essential injectivity in the
following lemma.

\begin{lem}
\label{comp} Let $\varphi:K_{\mu}\rightarrow\mathbb{R}^{d}$ be Borel
measurable and let $\nu=\varphi_{\ast}\mu$ be the corresponding pushforward
measure. Then, the following are equivalent.

\begin{enumerate}
\item The span of the collection $\chi_{\varphi^{-1}(F)}$, $F\subset
{\mathbb{R}}^{d}$, $F$ Borel, is dense in $L^{2}(\mu)$.

\item For any $f\in L^{2}(\mu)$, there exists $h\in L^{2}(\nu)$ such that
$f(x)=h(\varphi(x))$ $\mu$-a.e .

\item There exists a Borel set $\mathcal{N}\subset K_{\mu}$ with
$\mu({\mathcal{N}}) = 0$ such that $\varphi$ is injective on $K_{\mu}%
\setminus{\mathcal{N}}$.
\end{enumerate}
\end{lem}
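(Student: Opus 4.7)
The plan is to establish $(1) \Leftrightarrow (2)$ via an isometry argument, then close the cycle by proving $(3) \Rightarrow (2)$ and $(1) \Rightarrow (3)$. The first equivalence rests on the change of variables formula
\[
\int |h|^{2}\, d\nu = \int |h\circ\varphi|^{2}\, d\mu
\]
valid for every nonnegative Borel measurable $h$, which makes composition $h \mapsto h\circ\varphi$ an isometric embedding of $L^{2}(\nu)$ into $L^{2}(\mu)$. For $(1) \Rightarrow (2)$, I would fix $f \in L^{2}(\mu)$ and an approximating sequence $g_{n} = \sum_{j} c_{j,n}\, \chi_{\varphi^{-1}(F_{j,n})}$; setting $h_{n} = \sum_{j} c_{j,n}\, \chi_{F_{j,n}}$ gives $g_{n} = h_{n} \circ \varphi$ pointwise, and the isometry transfers the Cauchy property of $(g_{n})$ to $(h_{n})$, yielding a limit $h \in L^{2}(\nu)$ with $h \circ \varphi = f$ $\mu$-a.e. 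For $(2) \Rightarrow (1)$, I would approximate the $h$ supplied by $(2)$ by Borel simple functions in $L^{2}(\nu)$ and compose with $\varphi$, landing directly in the span described in $(1)$.

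For $(3) \Rightarrow (2)$, the essential tool is the Lusin--Souslin theorem: a Borel injection between standard Borel spaces has Borel image and a Borel measurable inverse on that image. Applied to the restriction $\varphi|_{K_{\mu}\setminus\mathcal{N}}$, this yields a Borel measurable partial inverse $\psi$ defined on $\varphi(K_{\mu}\setminus\mathcal{N})$. Defining $h = f \circ \psi$ on this image and $h = 0$ elsewhere, both the integrability $h \in L^{2}(\nu)$ and the identity $h \circ \varphi = f$ $\mu$-a.e.\ follow from the change of variables formula restricted to the full-measure set $K_{\mu}\setminus\mathcal{N}$.

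The main obstacle is $(1) \Rightarrow (3)$, where separability of $\mathbb{R}^{d}$ must be exploited. I would fix a countable basis $\{B_{n}\}$ for the topology of $\mathbb{R}^{d}$ consisting of open balls with rational centers and radii, which automatically separates points of $K_{\mu}$. Applying the already established $(1) \Leftrightarrow (2)$ to each indicator $\chi_{B_{n}\cap K_{\mu}}$ produces $h_{n} \in L^{2}(\nu)$ with $h_{n} \circ \varphi = \chi_{B_{n}\cap K_{\mu}}$ $\mu$-a.e. Because the right-hand side is $\{0,1\}$-valued $\mu$-a.e., I can replace $h_{n}$ by $\chi_{F_{n}}$, where $F_{n} := \{y : h_{n}(y) > 1/2\}$, without altering the identity $\nu$-almost-everywhere on the essential image of $\varphi$. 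Collecting the countably many exceptional null sets into a single Borel null set $\mathcal{N}$, I obtain the equivalence $x \in B_{n} \Leftrightarrow \varphi(x) \in F_{n}$ for every $n$ and every $x \in K_{\mu}\setminus\mathcal{N}$. If $\varphi(x) = \varphi(y)$ with $x, y \in K_{\mu}\setminus\mathcal{N}$, then $x$ and $y$ lie in exactly the same basis sets, so $x = y$.

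I expect the most delicate step to be the indicator replacement in the $(1) \Rightarrow (3)$ argument, where one must confirm that the rewriting $h_{n} \rightsquigarrow \chi_{F_{n}}$ introduces no new error on the support of $\nu$; this can be handled by noting that $h_{n}$ agrees $\nu$-a.e.\ with a $\{0,1\}$-valued function, precisely because $h_{n}\circ\varphi$ is $\{0,1\}$-valued $\mu$-a.e.\ and any $\nu$-null set on the image pulls back to a $\mu$-null set. The second sensitive point is the appeal to Lusin--Souslin in $(3) \Rightarrow (2)$, but since $K_{\mu}$ is a Borel subset of $\mathbb{R}^{d}$ this applies verbatim.
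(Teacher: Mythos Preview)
Your proposal is correct and follows the paper closely on $(1)\Rightarrow(2)$ and $(3)\Rightarrow(2)$ (the paper also invokes the Lusin--Souslin theorem, citing Kechris). The one genuine difference is in reaching $(3)$. The paper argues $(2)\Rightarrow(3)$ by applying $(2)$ directly to the $d$ coordinate functions $x_{1},\dots,x_{d}$, obtaining Borel $h_{1},\dots,h_{d}\in L^{2}(\nu)$ with $x=\mathbf{h}(\varphi(x))$ $\mu$-a.e., which immediately forces injectivity off the single null set where this identity fails. Your route instead applies $(2)$ to the indicators $\chi_{B_{n}\cap K_{\mu}}$ of a countable basis and uses point-separation. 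Both are valid; the paper's argument is shorter (only $d$ functions rather than countably many, and no indicator-replacement step), but it tacitly needs $x_{k}\in L^{2}(\mu)$, i.e.\ a second-moment condition on $\mu$, whereas your basis-of-indicators argument works for any finite Borel $\mu$ with no moment hypothesis. Your $(2)\Rightarrow(1)$ is also phrased a bit differently: the paper shows each $\chi_{A}$ is \emph{equal} in $L^{2}(\mu)$ to some $\chi_{\varphi^{-1}(E)}$ (by applying $(2)$ to $\chi_{A}$ and observing the resulting $h$ must be $\{0,1\}$-valued $\nu$-a.e.), while you approximate; both reach the same conclusion.
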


\begin{proof}
$((2)\Longrightarrow(1)).$ Letting $A\subset K_{\mu}$ be a Borel subset of
${\mathbb{R}}^{d},$ then $\chi_{A}$, the indicator function of $A$, satisfies
the following condition: There exists $h\in L^{2}(\nu)$ such that $\chi
_{A}(x)=h(\varphi(x))$ $\mu$-a.e. Furthermore, we may assume that $h\geq0$
$\nu-$a.e by replacing $h$ by $|h|$ if necessary. Noting that
\[
\nu(\{h\neq0\,\,\text{or}\,\,1\})=\mu(\{x\in K_{\mu},\,\,h(\varphi
(x))\neq0\,\,\text{or}\,\,1\})=\mu(\{x\in K_{\mu},\,\,\chi_{A}(x)\neq
0\,\,\text{or}\,\,1\})=0,
\]
we obtain that $h=\chi_{E}$ a.e. ($d\nu$) where $E=\{y\in{\mathbb{R}}%
^{d},\,\,h(y)=1\}$. Thus
\[
\chi_{A}(x)=\chi_{E}(\varphi(x))=\chi_{\varphi^{-1}(E)}(x)
\]
for $\mu$-a.e.~$x$. In other words, $\chi_{A}=\chi_{\varphi^{-1}(E)}$ and (1)
follows since the linear span of the characteristic functions of the Borel
subsets (i.e. all simple functions) forms a dense subspace in $L^{2}(\mu)$.

$((1)\Longrightarrow(2)).$ Suppose that (1) holds. For a fixed vector $f\in
L^{2}(\mu),$ there exists a sequence $\{f_{n}\}_{n\in\mathbb{N}}$ of the form
\[
f_{n}(x)=\sum_{j=1}^{M(n)}\,c_{n,j}\,\chi_{\varphi^{-1}(E_{n,j})}%
(x)=\sum_{j=1}^{M(n)}\,c_{n,j}\,\chi_{E_{n,j}}(\varphi(x)),\quad n\geq1,
\]
where $c_{n,j}\in\mathbb{C}$ and $E_{n,j}$ are Borel subset of $\mathbb{R}%
^{d}$ for $n\geq1$ and $1\leq j\leq M(n)$, such that $f_{n}\rightarrow f$ in
$L^{2}(\mu)$. Letting
\[
h_{n}(y)=\sum_{j=1}^{M(n)}\,c_{n,j}\,\chi_{E_{n,j}}(y),\quad n\geq
1,\,\,\left(  y\in\mathbb{R}^{d}\right)  ,
\]
we obtain
\[
\int_{\mathbb{R}^{d}}\,|h_{n}(y)-h_{m}(y)|^{2}\,d\nu(y)=\int\,|h_{n}%
(\varphi(x))-h_{m}(\varphi(x))|^{2}\,d\mu(x)=\int\,|f_{n}(x)-f_{m}%
(x)|^{2}\,d\mu(x).
\]
Since $\{f_{n}\}_{n\in\mathbb{N}}$ is a Cauchy sequence in $L^{2}(\mu)$, it
follows that $\left\{  h_{n}\right\}  _{n\in\mathbb{N}}$ is a Cauchy sequence
in $L^{2}(\nu)$. Since $L^{2}(\nu)$ is complete, there exists a vector $h\in
L^{2}(\nu)$ such that $h_{n}\rightarrow h$ in $L^{2}(\nu)$. Consequently,
\begin{align*}
& \int\,|f(x)-h(\varphi(x))|^{2}\,d\mu(x)\\
&  \leq2\,\left\{  \int\,|f(x)-h_{n}(\varphi(x))|^{2}\,d\mu(x)+\int
_{[0,1)^{d}}\,|h_{n}(\varphi(x))-h(\varphi(x))|^{2}\,d\mu(x)\right\} \\
&  =2\,\left\{  \int\,|f(x)-f_{n}(x))|^{2}\,d\mu(x)+\int_{\mathbb{R}^{d}%
}\,|h_{n}(y))-h(y)|^{2}\,d\nu(y)\right\}
\end{align*}
and the last quantity converges to zero as $n \to \infty$. This shows that
$f(x)=h(\varphi(x))$, for $\mu$-a.e.

\medskip

$((2)\Longrightarrow(3).)$ Assuming that (2) holds, for fixed $k\in\left\{
1,\cdots,d\right\} ,$ there exist functions $h_{k}$ in $L^{2}(\nu)$ such that
$x_{k}=h_{k}(\varphi(x))$, for $\mu$-a.e.~$x\in\mathbb{R}^{d}$. Note that $\nu$ is a finite Borel measure and hence regular. For a regular Borel measure, we can find Borel measurable function that are equal to $h_k$ $\mu$-a.e.   We may therefore assume that $h_k$ is Borel measurable. Defining
$\mathbf{h}=(h_{1},...,h_{d})$, we obtain
\begin{equation}
\label{eqx}x=\mathbf{h}(\varphi(x))
\end{equation}
for a.e.~$x\in K_{\mu}$. Consequently, letting ${\mathcal{N}}$ be the subset
of $K_{\mu}$ for which (\ref{eqx}) does not hold, it follows that $\mu(K_{\mu
}\setminus{\mathcal{N}}) = 0$. If $x,y\in K_{\mu}\setminus{\mathcal{N}}$ and
$\varphi(x) = \varphi(y)$, applying (\ref{eqx}), we obtain that $x=y$. Hence,
$\varphi$ is injective on $K_{\mu}\setminus{\mathcal{N}}$. Finally, since $h_k$ and $\varphi$ are Borel measurable, so is $g = h\circ\varphi- I$, where $I$ is the identity map.  ${\mathcal N}$ is therefore  a Borel set since we can write
$$
K_{\mu}\setminus{\mathcal N} = \{g=0\} = \bigcap_{n=1}^{\infty} \left\{-\frac1{n}<g<\frac1n\right\}.
$$
. 

\medskip

$((3)\Longrightarrow(2).)$ As $\varphi$ is Borel measurable and $\varphi
|_{K_{\mu}\setminus{\mathcal{N}}}$ is injective, by \cite[Corollary
15.2]{Kechris}, $\varphi(B)$ is Borel measurable for all Borel sets $B\subset
K_{\mu}\setminus{\mathcal{N}}$. Hence, we can define $g = \varphi^{-1}:
\varphi[K_{\mu}\setminus{\mathcal{N}}]\to K_{\mu}\setminus{\mathcal{N}}$ such
that $x = g(\varphi(x))$ and $g$ is Borel measurable. Given any $f\in
L^{2}(\mu)$, since $\mu$ is a regular Borel measure, we can find a Borel
measurable function $\widetilde{f}$ such that $f(x) = \widetilde{f}(x)$ $\mu
$-a.e. We also have $\widetilde{f}(x) = \widetilde{f} (g(\varphi(x)))$ on
$K_{\mu}\setminus{\mathcal{N}}$ (thus $\mu$-a.e.). Define $h = \widetilde
{f}\circ g$. Then $f(x) = \widetilde{f}(x) = h(\varphi(x))$ $\mu$ a.e, $h$ is
Borel measurable and
\[
\int|h|^{2}d\nu= \int|\widetilde{f}(g(\varphi(x)))|^{2}d\mu(x) =
\int|\widetilde{f}(x)|^{2}d\mu(x) = \int|f|^{2}d\mu.
\]
Therefore, $h\in L^{2}(\nu)$ and the proof is complete.
\end{proof}

\medskip

\section{Main Characterization}

The aim of this section is to provide necessary and sufficient conditions for
the completeness, spectrality and frame-spectrality of $E(\Lambda,\varphi)$ in
terms of the essential injectivity of $\varphi$ and the corresponding property
of the system $E(\Lambda)$ in the $L^{2}$-space associated with the
push-forward measure.

\subsection{Completeness.}

We recall that a collection $\{f_{n}\}$ is complete in the Hilbert space $H$
if $\langle f, f_{n}\rangle= 0$ for all $n$ implies that $f = 0$ on $H$. This
condition is also equivalent to the fact that the closure of the linear span
of $f_{n}$ is dense in $H$. We notice that an orthogonal basis must be a
frame, and a frame must be complete in $H$.

\begin{theorem}
\label{theorem_complete} Let $\mu$ be a finite Borel measure on ${\mathbb{R}%
}^{d}$, let $\Lambda\subset\mathbb{R}^{d}$ be countable and let $\varphi
:K_{\mu}\rightarrow\mathbb{R}^{d}$ be Borel measurable. Let $\nu$ be the
pushforward Borel measure on $\mathbb{R}^{d}$ associated with $\varphi$. Then,
the following are equivalent.

\begin{enumerate}
\item The collection $E(\Lambda, \varphi)$ is complete in $L^{2}(\mu)$.

\item $\varphi$ is $\mu$-essentially injective, and $E(\Lambda)$ is complete in
$L^{2}(\nu)$.
\end{enumerate}
\end{theorem}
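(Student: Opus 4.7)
The plan is to reduce everything to the observation that pulling back by $\varphi$ defines an isometric embedding of $L^2(\nu)$ into $L^2(\mu)$, and then identify when this embedding is surjective.

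First I would introduce the operator $U:L^2(\nu)\to L^2(\mu)$ defined by $U(h)(x)=h(\varphi(x))$. The change-of-variables formula for pushforward measures gives
\[
\|U(h)\|_{L^2(\mu)}^2=\int|h(\varphi(x))|^2\,d\mu(x)=\int|h(y)|^2\,d\nu(y)=\|h\|_{L^2(\nu)}^2,
\]
so $U$ is an isometry, and therefore its range $V:=U(L^2(\nu))$ is a closed subspace of $L^2(\mu)$. Moreover, since $e^{2\pi i\lambda\cdot\varphi(x)}=U(e_\lambda)(x)$ where $e_\lambda(y):=e^{2\pi i\lambda\cdot y}$, we have $E(\Lambda,\varphi)\subset V$ and $\mathrm{span}\,E(\Lambda,\varphi)=U(\mathrm{span}\,E(\Lambda))$. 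The entire theorem will follow by tracking density across $U$.

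For the direction $(1)\Rightarrow(2)$, I would argue that because $V$ is closed and contains the complete system $E(\Lambda,\varphi)$, we must have $V=L^2(\mu)$. By condition (2) of Lemma \ref{comp}, this is precisely the assertion that $\varphi$ is $\mu$-essentially injective. To obtain completeness of $E(\Lambda)$ in $L^2(\nu)$, take any $h\in L^2(\nu)$ orthogonal to every $e_\lambda$; then $f:=U(h)\in L^2(\mu)$ satisfies
\[
\int f(x)\,\overline{e^{2\pi i\lambda\cdot\varphi(x)}}\,d\mu(x)=\int h(y)\,\overline{e^{2\pi i\lambda\cdot y}}\,d\nu(y)=0
\]
for every $\lambda\in\Lambda$, so completeness of $E(\Lambda,\varphi)$ forces $f=0$ in $L^2(\mu)$, and then the isometry property of $U$ forces $h=0$ in $L^2(\nu)$.

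For the converse $(2)\Rightarrow(1)$, essential injectivity combined with Lemma \ref{comp} again gives $V=L^2(\mu)$. Given $f\in L^2(\mu)$, write $f=U(h)$ for some $h\in L^2(\nu)$; the same identity above shows that $f\perp E(\Lambda,\varphi)$ in $L^2(\mu)$ if and only if $h\perp E(\Lambda)$ in $L^2(\nu)$, which by hypothesis forces $h=0$ and hence $f=0$. The main (and only real) subtlety is ensuring that the pushforward identity is applied correctly to arbitrary $L^2$ functions rather than just bounded continuous ones; this is handled by a standard density argument, since simple functions on $\mathbb{R}^d$ pull back to simple functions on $K_\mu$ and both sides are continuous in the relevant norm. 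Everything else is a direct consequence of the fact that an isometry between Hilbert spaces preserves completeness of subsets.
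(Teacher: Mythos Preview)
Your proof is correct and rests on the same ingredients as the paper's (the change-of-variables identity and Lemma~\ref{comp}), but it is packaged more cleanly. The paper's argument for $(1)\Rightarrow(2)$ works by hand: it approximates $e^{2\pi i\xi\cdot x}$ in $L^2(\nu)$ by simple functions, pulls these back through $\varphi$, and concludes that the span of the characteristic functions $\chi_{\varphi^{-1}(F)}$ is dense in $L^2(\mu)$, thereby verifying condition~(1) of Lemma~\ref{comp}. You bypass this by observing once and for all that the pull-back map $U:L^2(\nu)\to L^2(\mu)$ is an isometry, hence has closed range $V$; since $E(\Lambda,\varphi)\subset V$, completeness of $E(\Lambda,\varphi)$ forces $V=L^2(\mu)$ directly, which is exactly condition~(2) of Lemma~\ref{comp}. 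Both approaches are equivalent in content, but yours isolates the functional-analytic mechanism (closed range of an isometry) and makes the transfer of completeness across $U$ transparent in both directions; the paper's version is more concrete but slightly longer. The density remark you flag at the end is indeed the only point requiring care, and the standard monotone-class or simple-function argument you sketch handles it.
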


\begin{proof}
Let us assume that (2) holds and let $g\in L^{2}(\mu)$ such that
\[
\int\,g(x)\,e^{-2\pi i\lambda\cdot\varphi(x)}\,d\mu(x)=0,\quad\lambda
\in\Lambda.
\]
We aim to show that $g$ must be the trivial element in $L^{2}(\mu).$ Since
$\varphi$ is $\mu$-essentially injective, appealing to Lemma \ref{comp}, there
exists $h\in L^{2}(\nu)$ such that $g(x)=h(\varphi(x))$ $\mu$-a.e. As such,
\[
0=\int\,h(\varphi(x))\,e^{-2\pi i\lambda\cdot\varphi(x)}\,d\mu(x)=\int
_{\mathbb{R}^{d}}\,h(y)\,e^{-2\pi i\lambda\cdot y}\,d\nu(y),\quad\lambda
\in\Lambda.
\]
By assumption, $\{e^{2\pi i\lambda\cdot x}\}_{\lambda\in\Lambda}$ is complete
in $L^{2}(\nu),$ and it follows that $h=0$. Consequently,
\[
\Vert g\Vert_{2,\mu}^{2}=\int\left\vert h(\varphi(x))\right\vert ^{2}%
d\mu\left(  x\right)  =\int\left\vert h(y)\right\vert ^{2}d\nu\left(
y\right)  =\Vert h\Vert_{2,\nu}^{2}=0
\]
and we conclude that $g=0$ as desired.

\medskip

Conversely, let $\xi\in\mathbb{R}^{d}$ and fix $\epsilon>0$. Since simple
functions are dense in $L^{2}(\nu)$, there exist Borel measurable sets
$F_{k}\subset\mathbb{R}^{d}$ and $c_{k}\in\mathbb{C}$, $k=1,\dots,m$, such
that
\[
\Vert e^{2\pi i\xi\cdot x}-\sum_{k=1}^{m}\,c_{k}\,\chi_{F_{k}}(x)\big\|_{L^{2}%
(\nu)}<\epsilon.
\]
This implies that
\[
\int\,\big|e^{2\pi i\xi\cdot\varphi(x)}-\sum_{k=1}^{m}\,c_{k}\,\chi
_{\varphi^{-1}(F_{k})}(x)\big|^{2}d\mu(x)=\int\big|e^{2\pi i\xi\cdot
\varphi(x)}-\sum_{k=1}^{m}\,c_{k}\,\chi_{F_{k}}(\varphi(x))\big|^{2}%
d\mu(x)<\epsilon^{2}.
\]
Since we assume that the span of the collection $E(\Lambda, \varphi) =
\{e^{2\pi i\lambda\cdot\varphi(x)}\}_{\lambda\in\Lambda}$ is dense in
$L^{2}(\mu)$, the above shows that the span of the collection $\chi
_{\varphi^{-1}(F)}$, where $F\subset\mathbb{R}^{d}$ is Borel, is also dense
and therefore $\varphi$ is $\mu$-essentially injective by Lemma \ref{comp}. We
finally show that $E(\Lambda)$ is complete in $L^{2}(\nu)$. To see this, let
$f\in L^{2}(\mu)$ and suppose that $\int f(x) e^{-2\pi i \lambda\cdot x}%
d\nu(x) = 0$ for all $\lambda\in\Lambda$. Then
\[
\int f(\varphi(x)) e^{-2\pi i \lambda\cdot\varphi(x)}d\mu(x) = 0,
\ \forall\lambda\in\Lambda.
\]
By (1), $f(\varphi(x)) = 0$ $\mu$-a.e. As a set, we note that $\varphi
^{-1}\{y: f(y)\ne0\} = \{x: f(\varphi(x))\ne0\}$.
\[
\nu(\{f\ne0\}) = \mu(\varphi^{-1}\{f\ne0\}) = \mu(\{x: f(\varphi(x))\ne0\}) =
0.
\]
This shows $f=0$, $\nu$-a.e, which means that $E(\Lambda)$ is complete in
$L^{2}(\nu)$.
\end{proof}

\medskip

\subsection{Orthogonal basis and frames}

We shall now give a complete characterization of functions $\varphi: K_{\mu
}\to\mathbb{R}^{d}$ for which $E(\Lambda,\varphi)$ forms an orthogonal basis
or a frame for $L^{2}(\mu)$.


\begin{theorem}
\label{theorem_basis_frame} Let $\mu$ be a finite Borel measure,
$\Lambda\subset\mathbb{R}^{d}$ be countable and let $\varphi: K_{\mu}%
\to\mathbb{R}^{d} $ be Borel measurable. Let $\nu$ be the pushforward Borel
measure on $\mathbb{R}^{d}$ associated with $\varphi$ and $\mu$ Then

\begin{enumerate}
\item The collection $E(\Lambda, \varphi)$ is an orthogonal basis for
$L^{2}(\mu)$ if and only if $\varphi$ is $\mu$-essentially injective and $\nu$ is a
spectral measure with a spectrum $\Lambda$.

\item The collection $E(\Lambda,\varphi)$ forms a frame for $L^{2}(\mu)$ if
and only if $\varphi$ is $\mu$-essentially injective and $\nu$ is a frame-spectral
measure with a frame spectrum $\Lambda$.

\end{enumerate}
\end{theorem}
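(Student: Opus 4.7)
The plan is to reduce both equivalences to a single unitary-transfer argument. Define the pull-back operator $U\colon L^2(\nu)\to L^2(\mu)$ by $(Uh)(x)=h(\varphi(x))$. The push-forward change-of-variables formula
$$
\int |h(\varphi(x))|^2 \, d\mu(x) \;=\; \int |h(y)|^2 \, d\nu(y),
$$
valid first for bounded Borel $h$ and then extended by density to all $h\in L^2(\nu)$, shows that $U$ is always an isometric embedding. The content of Lemma~\ref{comp} is precisely that $U$ is \emph{surjective} (and hence a unitary isomorphism) if and only if $\varphi$ is $\mu$-essentially injective. A direct computation gives $U(e^{2\pi i \lambda\cdot y}) = e^{2\pi i \lambda\cdot\varphi(x)}$, so $U$ sends the system $E(\Lambda)$ bijectively onto $E(\Lambda,\varphi)$.

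For the two ``if'' directions, suppose $\varphi$ is essentially injective, so $U$ is unitary. Unitary operators transport orthogonal bases to orthogonal bases and frames to frames with the same bounds. Hence if $\nu$ is spectral (resp.~frame-spectral) with spectrum $\Lambda$, then $E(\Lambda)$ is an orthogonal basis (resp.~a frame) for $L^2(\nu)$, and applying $U$ yields the corresponding conclusion for $E(\Lambda,\varphi)$ in $L^2(\mu)$.

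For the two ``only if'' directions, observe that both an orthogonal basis and a frame are complete. Thus in either case the assumption on $E(\Lambda,\varphi)$ implies completeness in $L^2(\mu)$, and Theorem~\ref{theorem_complete} forces $\varphi$ to be $\mu$-essentially injective (and $E(\Lambda)$ to be complete in $L^2(\nu)$). With essential injectivity now in hand, $U$ is unitary, and $U^{-1}$ transports the orthogonal basis (resp.~frame, with identical bounds) property from $E(\Lambda,\varphi)\subset L^2(\mu)$ back to $E(\Lambda)\subset L^2(\nu)$; this says that $\nu$ is spectral (resp.~frame-spectral) with spectrum $\Lambda$.

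The only substantive step is the identification of essential injectivity with surjectivity of $U$, which is already supplied by Lemma~\ref{comp}, so I do not anticipate any genuine obstacle. The remainder is routine bookkeeping, namely verifying that the Bessel--frame inequalities for $E(\Lambda,\varphi)$ in $L^2(\mu)$ are equivalent, via the substitution $f=Uh$ and the identity $\langle Uh, U e^{2\pi i\lambda\cdot y}\rangle_{L^2(\mu)} = \langle h, e^{2\pi i\lambda\cdot y}\rangle_{L^2(\nu)}$, to the corresponding inequalities for $E(\Lambda)$ in $L^2(\nu)$ with the same constants $A$ and $B$.
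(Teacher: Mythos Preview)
Your proposal is correct and follows essentially the same approach as the paper: both arguments rely on the push-forward change-of-variables identity to transfer orthogonality and the frame inequalities between $L^2(\mu)$ and $L^2(\nu)$, and both invoke Theorem~\ref{theorem_complete} to extract $\mu$-essential injectivity from completeness in the ``only if'' direction. The only difference is cosmetic---you package the argument via the isometry $U$ and appeal to Lemma~\ref{comp} for its surjectivity, whereas the paper writes out the relevant integrals explicitly---but the underlying computations are identical.
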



\begin{proof}
For the first part, we note that the pushforward Borel measure $\nu$ is
bounded and
\[
\int_{\mathbb{R}^{d}}\,f(y)\,d\nu(y)=\int_{}\,f(\varphi(x))\,d\mu(x),
\]
for any bounded, continuous function $f:\mathbb{R}^{d}\to\mathbb{C}$. The
orthogonality in $L^{2}(\nu)$ of the collection of exponentials with spectrum
in $\Lambda$ gives
\[
\int\,e^{2\pi i (\lambda-\lambda^{\prime}\cdot\varphi(x)) }\,d\mu(x)
=\int_{\mathbb{R}^{d}}\,e^{2\pi i (\lambda-\lambda^{\prime})\cdot y }%
\,d\nu(y)=\delta_{\lambda,\lambda^{\prime}}, \quad\lambda,\lambda^{\prime}%
\in\Lambda.
\]
This shows that the collection $\{e^{2\pi i \lambda\cdot\varphi(x)
}\}_{\lambda\in\Lambda}$ is mutually orthogonal for $L^{2}(\mu)$ if and only
if $\{e^{2\pi i \lambda\cdot x }\}_{\lambda\in\Lambda}$ is mutually orthogonal
for $L^{2}(\nu)$. Next, appealing to Theorem \ref{theorem_complete}, we obtain
that $E(\Lambda, \varphi)$ is complete if and only if $\varphi$ is $\mu$-essentially
injective and $E(\Lambda)$ is complete in $L^{2}(\nu).$ This takes care of the
first part of the result.

\medskip

\noindent For the second part, let us suppose that $E(\Lambda,\varphi)$ forms
a frame for $L^{2}(\mu)$. As such, the collection $E(\Lambda, \varphi)$ is
complete in $L^{2}(\mu)$. By Theorem \ref{theorem_complete}, ${\varphi}$ is
$\mu$-essentially injective. Next, given $h\in L^{2}(\nu)$,
\[
\sum_{\lambda\in\Lambda}\left\vert \int h(y)e^{-2\pi i\lambda\cdot y}%
d\nu(y)\right\vert ^{2}=\sum_{\lambda\in\Lambda}\left\vert \int h(\varphi
(x))e^{-2\pi i\lambda\cdot\varphi(x)}d\mu(y)\right\vert ^{2}.
\]
However,
\[
\int|h(y)|^{2}d\nu(y)=\int|h(\varphi(x))|^{2}d\mu(x),
\]
and the frame inequalities for $E(\Lambda)$ in $L^{2}(\nu)$ follow from those
of $E(\Lambda, \varphi)$ in $L^{2}(\mu)$. Conversely, since $\varphi$ is
$\mu$-essentially injective, any function $f\in L^{2}(\mu)$ can be written as
$f(x)=h(\varphi(x))$ $\mu$-a.e. with $h\in L^{2}(\nu)$. Therefore, the frame
inequalities for $E(\Lambda, \varphi)$ for $L^{2}(\mu)$ also follow from those
of $E(\Lambda)$ in $L^{2}(\nu)$.
\end{proof}

\medskip

\subsection{Lebesgue measures on general sets.}

We now turn to study the special case where $\mu$ is the Lebesgue measure
restricted on a set. Given a Borel set $\Omega\subseteq{\mathbb{R}}^{d}$ of
positive finite Lebesgue measure, we define the measure $m_{\Omega}$ as
follows:%
\[
m_{\Omega}=\chi_{\Omega}\,dx
\]

\medskip

Letting $\Lambda$ be a countable discrete set, we define the \textit{upper and
lower Beurling density} of $\Lambda$, denoted by $D^{+}(\Lambda)$ and
$D^{-}(\Lambda)$ respectively, as follows:
\[
D^{+}(\Lambda)=\limsup_{R\rightarrow\infty}\sup_{x\in{\mathbb{R}}^{d}}%
\frac{\#(\Lambda\cap Q_{R}(x))}{R^{d}},\ \mbox{and}\ D^{-}(\Lambda
)=\liminf_{R\rightarrow\infty}\inf_{x\in{\mathbb{R}}^{d}}\frac{\#(\Lambda\cap
Q_{R}(x))}{R^{d}},
\]
where $Q_{R}(x)=x+[-\frac{R}{2},\frac{R}{2}]^{d}$ denotes the hypercube of
side length $R$ centered at $x$. In \cite[Proposition 2.1]{HLL}, He, Lai and
Lau proved

\begin{proposition}
If $D^{-}(\Lambda)>0$ and $\Lambda$ is a frame spectrum for $L^{2}(\mu)$, then
$\mu$ must be absolutely continuous with respect to the Lebesgue measure.
\end{proposition}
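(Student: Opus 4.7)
The plan is to use the upper frame bound together with a modulation-averaging trick to show that for every $f\in L^2(\mu)$ the Fourier transform $\widehat{f\mu}$ belongs to $L^2(\mathbb{R}^d)$, and then to conclude via a Plancherel/duality argument that $\mu$ has an $L^2(dx)$ density.

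First, I would start from the Bessel half of the frame inequality $\sum_{\lambda\in\Lambda}|\widehat{f\mu}(\lambda)|^2 \le B\|f\|_{L^2(\mu)}^2$ and apply it with $e^{2\pi i t\cdot x}f(x)$ in place of $f(x)$. Since modulation is an isometry on $L^2(\mu)$, this yields the shifted inequality
$$\sum_{\lambda\in\Lambda}\big|\widehat{f\mu}(\lambda-t)\big|^2 \le B\|f\|_{L^2(\mu)}^2, \qquad t\in\mathbb{R}^d.$$
Averaging this over $t\in Q_R(0)$ and interchanging sum and integral by Tonelli (the integrand is non-negative), a symmetric-cube change of variable together with the identity $\sum_{\lambda}\chi_{\lambda+Q_R(0)}(s) = \#(\Lambda\cap Q_R(s))$ rewrites the resulting left-hand side as $\int_{\mathbb{R}^d}\#(\Lambda\cap Q_R(s))\,|\widehat{f\mu}(s)|^2\,ds$, while the right-hand side becomes $B\,R^d\,\|f\|_{L^2(\mu)}^2$.

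The key input is then the hypothesis $D^-(\Lambda)>0$: for all sufficiently large $R$, one has $\#(\Lambda\cap Q_R(s)) \ge \tfrac12 D^-(\Lambda)\,R^d$ uniformly in $s\in\mathbb{R}^d$. Inserting this into the previous display and dividing through by $R^d$ gives the desired estimate
$$\int_{\mathbb{R}^d}\big|\widehat{f\mu}(s)\big|^2\,ds \le \frac{2B}{D^-(\Lambda)}\|f\|_{L^2(\mu)}^2, \qquad f\in L^2(\mu).$$
Specializing to $f\equiv 1$, which lies in $L^2(\mu)$ because $\mu$ is finite, shows that $\hat\mu\in L^2(\mathbb{R}^d)$.

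To finish, I would let $g\in L^2(\mathbb{R}^d)$ be the $L^2$-inverse Fourier transform of $\hat\mu$ and pair against Schwartz test functions. Fubini gives $\int \hat\phi\,d\mu = \int \phi\,\hat\mu\,dx = \int \phi\,\hat g\,dx$, and by Plancherel this equals $\int \hat\phi\,g\,dx$; since the Fourier transform is a bijection on the Schwartz class, $\int \psi\,d\mu = \int \psi g\,dx$ for every Schwartz $\psi$, whence $d\mu = g\,dx$ by density and $\mu\ll dx$. The main obstacle is this last identification: promoting $\hat\mu\in L^2$ to the existence of an $L^2$ density of $\mu$ requires the duality computation above rather than a direct application of Plancherel (since $\mu$ is a priori only a finite Borel measure). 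The shift-average manipulation and the invocation of the Beurling lower bound are otherwise routine bookkeeping once the modulation trick is in place.
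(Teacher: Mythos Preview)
The paper does not actually prove this proposition; it merely quotes it as \cite[Proposition 2.1]{HLL} (He, Lai, Lau), so there is no in-paper argument to compare against. Your proof is correct and is in fact the standard argument for this result: the modulation-plus-averaging trick converting the Bessel bound into the uniform estimate $\int|\widehat{f\mu}|^2\le C\|f\|_{L^2(\mu)}^2$, the use of $D^-(\Lambda)>0$ to bound $\#(\Lambda\cap Q_R(s))$ from below, and the duality step identifying $\mu$ with the $L^2$-inverse transform of $\hat\mu$ are exactly the ingredients in the cited reference. One small cosmetic point: in the last paragraph you might state explicitly that the multiplication formula $\int\phi\,\hat g=\int\hat\phi\,g$ holds for $\phi$ Schwartz and $g\in L^2$ by approximating $g$ in $L^2$ by Schwartz functions, rather than calling it ``Plancherel'' outright; but this is a matter of naming, not a gap.
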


Moreover, in
  \cite[Theorem 1.3 and Corollary 1.4]{DL14}, the following was
proved.

\begin{theorem}
\label{theorem_DL} Let $\Lambda$ be a countable discrete set with
$D^{-}(\Lambda)>0$ and let $\mu$ be a finite Borel measure. The following
holds true

\begin{enumerate}
\item If $\{e^{2\pi i\lambda\cdot x}:\lambda\in\Lambda\}$ forms a frame for
$L^{2}(\mu)$, then $\mu$ is absolutely continuous with respect to the Lebesgue
measure with density $g$ satisfying $0<m\leq g\leq M<\infty$ almost everywhere
on the set $\{g\ne 0\}$. (for some positive real numbers $m$ and $M.$)

\item If $\{e^{2\pi i\lambda\cdot x}:\lambda\in\Lambda\}$ forms an orthogonal
basis for $L^{2}(\mu)$, then $\mu$ is absolutely continuous with respect to
the Lebesgue measure with a constant density. \end{enumerate}
\end{theorem}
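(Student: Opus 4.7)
The plan is to exploit modulation invariance of the $L^2(\mu)$ norm to generate a family of frame inequalities parametrized by a frequency shift $\xi$, average over a large cube in $\xi$, and extract pointwise density bounds on $g$ via Fatou and Plancherel. Concretely, for any $\phi\in L^2(\mu)$ and $\xi\in\mathbb{R}^d$, the modulated test function $f_\xi(x) = \phi(x)\,e^{2\pi i\xi\cdot x}$ satisfies $\|f_\xi\|_{L^2(\mu)} = \|\phi\|_{L^2(\mu)}$ and $\widehat{f_\xi\,d\mu}(\lambda) = \widehat{\phi\,d\mu}(\lambda-\xi)$. Applying the frame inequality to $f_\xi$ gives
\[
A\|\phi\|_{L^2(\mu)}^2 \leq \sum_{\lambda\in\Lambda}\bigl|\widehat{\phi\,d\mu}(\lambda-\xi)\bigr|^2 \leq B\|\phi\|_{L^2(\mu)}^2,\qquad \xi\in\mathbb{R}^d.
\]
Integrating over $\xi\in Q_R(0)$, dividing by $R^d$, and applying Fubini together with the substitution $\eta = \lambda-\xi$ produces the sandwich $A\|\phi\|_{L^2(\mu)}^2 \leq I_R(\phi) \leq B\|\phi\|_{L^2(\mu)}^2$ for every $R>0$, where
\[
I_R(\phi) = \int_{\mathbb{R}^d}\bigl|\widehat{\phi\,d\mu}(\eta)\bigr|^2\,R^{-d}\,\#\bigl(\Lambda\cap(\eta+Q_R)\bigr)\,d\eta.
\]

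Absolute continuity now falls out of Fatou applied to $I_R$: since $\liminf_R R^{-d}\#(\Lambda\cap(\eta+Q_R)) \geq D^-(\Lambda) > 0$ pointwise in $\eta$,
\[
D^-(\Lambda)\int|\widehat{\phi\,d\mu}|^2\,d\eta \leq \liminf_R I_R(\phi) \leq B\|\phi\|_{L^2(\mu)}^2 < \infty,
\]
so $\widehat{\phi\,d\mu}\in L^2(\mathbb{R}^d)$ for every $\phi\in L^2(\mu)$. Taking $\phi\equiv 1$ shows $\hat\mu\in L^2$, which by Plancherel forces $\mu = g\,dx$ with $g\in L^2$. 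Having established absolute continuity, Plancherel yields $\int|\widehat{\phi g}|^2\,d\eta = \int|\phi|^2 g^2\,dx$, and the lower and upper halves of the sandwich (via Fatou and dominated convergence respectively, the latter relying on the fact that $D^+(\Lambda)<\infty$ is automatic for any frame spectrum) produce
\[
A\int|\phi|^2 g\,dx \leq D^+(\Lambda)\int|\phi|^2 g^2\,dx,\quad D^-(\Lambda)\int|\phi|^2 g^2\,dx \leq B\int|\phi|^2 g\,dx,
\]
for every $\phi\in L^2(\mu)$. Pointwise localization in $\phi$ yields $A/D^+(\Lambda) \leq g \leq B/D^-(\Lambda)$ almost everywhere on $\{g>0\}$, which is (1) with $m = A/D^+(\Lambda)$ and $M = B/D^-(\Lambda)$.

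For part (2), an orthonormal basis gives $A=B=1$, so the sandwich turns into a Parseval equality $\|\phi\|_{L^2(\mu)}^2 = I_R(\phi)$ for every $R$. Forcing the averaged counting factor to converge to a single constant $D$ yields $D^-(\Lambda) = D^+(\Lambda) = D$ (a rigidity property of complete interpolating spectra obtained through Landau-type density theorems), after which the density chain collapses to $g\equiv 1/D$ on $\{g>0\}$. The main obstacles are (i) producing a uniform-in-$\eta$ dominating function for the $\limsup$ step, which requires relative separation of $\Lambda$ (a direct consequence of the Bessel bound), and (ii) establishing the rigidity $D^-(\Lambda) = D^+(\Lambda)$ in the orthonormal basis case; the latter is the technical heart of (2) and is standard but nontrivial.
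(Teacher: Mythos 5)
First, a point of reference: the paper does not prove this statement at all --- it is quoted verbatim from Dutkay--Lai \cite[Theorem 1.3 and Corollary 1.4]{DL14} --- so your proposal can only be measured against the argument in that source, which is indeed the modulation-averaging technique you use. For part (1) your argument is essentially correct and complete in outline: the identity $I_R(\phi)=\int|\widehat{\phi\,d\mu}(\eta)|^2R^{-d}\#(\Lambda\cap(\eta+Q_R))\,d\eta$ is right, Fatou with the pointwise bound $\liminf_R R^{-d}\#(\Lambda\cap Q_R(\eta))\ge D^-(\Lambda)$ gives $\widehat{\phi\,d\mu}\in L^2$ and hence absolute continuity, and the two-sided localization $A/D^+(\Lambda)\le g\le B/D^-(\Lambda)$ a.e.\ on $\{g>0\}$ follows by testing against indicators. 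The two ingredients you assert without proof are both standard and correctly identified: relative separation of $\Lambda$ (hence $D^+(\Lambda)<\infty$ and the uniform domination $R^{-d}\#(\Lambda\cap Q_R(\eta))\le C$ needed for reverse Fatou) follows from the Bessel bound via $\sum_{\lambda'}|\hat\mu(\lambda-\lambda')|^2\le B\,\mu(\mathbb{R}^d)$ and the continuity of $\hat\mu$ at $0$; and $\hat\mu\in L^2$ forces $\mu=g\,dx$ by Plancherel and uniqueness of Fourier transforms of tempered distributions.

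Part (2), however, has a genuine gap that you yourself flag: the entire conclusion hinges on $D^-(\Lambda)=D^+(\Lambda)$, and you do not prove it. Without that rigidity, the tight-frame identity $A\|\phi\|^2=I_R(\phi)$ only sandwiches $g$ between $A/D^+$ and $A/D^-$ and does not collapse to a constant. The gap is fillable, and the cleanest route uses part (1) itself: once $\mu=g\,dx$ with $0<m\le g\le M$ on $\Omega:=\{g>0\}$ (a set of finite Lebesgue measure, since $\mu$ is finite and $g\ge m$), the unitary $f\mapsto f\sqrt{g}$ together with the boundedness of $\sqrt{g}^{\pm1}$ shows that an orthogonal basis $E(\Lambda)$ for $L^2(\mu)$ is simultaneously a Fourier frame and a Riesz sequence for $L^2(\Omega,dx)$; Landau's necessary density conditions for sets of finite measure then give $D^-(\Lambda)\ge m(\Omega)\ge D^+(\Lambda)$, forcing $D^-=D^+=m(\Omega)$, after which your dominated-convergence passage $\lim_R I_R(\phi)=D\int|\phi|^2g^2\,dx$ yields $g\equiv A/D$ on $\Omega$ as you describe. (Also note that for an orthogonal, rather than orthonormal, basis the tight bound is $A=B=\mu(\mathbb{R}^d)$, not $1$; this only changes the value of the constant.) So: part (1) is a correct reconstruction of the cited proof; part (2) is a correct reduction to Landau's theorem but is not a proof until that step is supplied.
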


\medskip

We now have the following characterization.

%

\begin{theorem}
\label{Fourier_Frame} Let $\mu=m_{\Omega}$ where $\Omega$ is a measurable set
of finite positive Lebesgue measure and let $\Lambda\subset\mathbb{R}^{d}$ be
countable with $D^{-}(\Lambda)>0$. Given a Borel measurable function $\varphi:
K_{\mu}\rightarrow\mathbb{R}^{d} $, define $\nu=\varphi_{\ast}\mu$ to be the
pushforward of the measure $\mu$ via $\varphi.$ Then, the following holds.

\medskip

\begin{enumerate}
\item The collection $\{e^{2\pi i \lambda\cdot\varphi(x)}\}_{\lambda\in
\Lambda}$ is an orthogonal basis for $L^{2}(\mu)$ if and only if there exists a set $\Omega_0$ such that $\varphi$ is injective on $\Omega_0$ and $m(\Omega\setminus\Omega_0) = 0$ with

\begin{enumerate}
\item $\varphi$ is  injective on $\Omega_0$,

\item $\varphi_{\ast}m_{\Omega_0} = c\cdot m_{\varphi(\Omega_0)}$ for some $c>0$.

\item $\varphi(\Omega_0)$ is a spectral set with a spectrum $\Lambda$ .
\end{enumerate}

\item The collection $\{e^{2\pi i \lambda\cdot\varphi(x) }\}_{\lambda
\in\Lambda}$ forms a frame for $L^{2}(\mu)$ if and only if there exists a set $\Omega_0$ such that $\varphi$ is injective on $\Omega_0$ and $m(\Omega\setminus\Omega_0) = 0$ with

\begin{enumerate}
\item $\varphi$ is  injective on $\Omega_0$,

\item There exists $0<m\leq M<\infty$ such that $\varphi_{\ast}\mu=g(x)dx$
with $m\leq g\leq M$ a.e. on $\varphi(\Omega_0)$ and

\item $\{e^{2\pi i \lambda\cdot x}\}_{\lambda\in\Lambda}$ forms a Fourier
frame for $L^{2}(\varphi(\Omega_0))$.
\end{enumerate}
\end{enumerate}


\end{theorem}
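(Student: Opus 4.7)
The plan is to reduce the statement to Theorem \ref{theorem_basis_frame}, which translates the basis or frame property of $E(\Lambda,\varphi)$ on $L^2(\mu)$ into $\mu$-essential injectivity of $\varphi$ together with a spectrum or frame-spectrum statement for $\nu = \varphi_\ast\mu$. Condition (a) will then come from Lemma \ref{comp}, while conditions (b) and (c) will be extracted by analyzing $\nu$ with Theorem \ref{theorem_DL}, whose hypothesis $D^{-}(\Lambda)>0$ is in force.

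For the forward direction of (1), suppose $E(\Lambda,\varphi)$ is an orthogonal basis for $L^2(\mu) = L^2(m_\Omega)$. Theorem \ref{theorem_basis_frame}(1) gives that $\varphi$ is $\mu$-essentially injective and that $\nu$ is a spectral measure with spectrum $\Lambda$. Lemma \ref{comp} then supplies a Borel set $\mathcal{N}\subset\Omega$ with $m(\mathcal{N})=0$ such that $\varphi$ is injective on $\Omega_0 := \Omega\setminus\mathcal{N}$; by the same Borel-selection argument used in the proof of Lemma \ref{comp}, I may also assume $\varphi(\Omega_0)$ is Borel. Because $D^-(\Lambda)>0$ and $\nu$ is spectral with spectrum $\Lambda$, Theorem \ref{theorem_DL}(2) forces $\nu$ to be absolutely continuous with a constant density $c>0$ on its support. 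Using the pushforward identity $\nu(E) = m(\Omega_0\cap\varphi^{-1}(E))$ and the injectivity of $\varphi$ on $\Omega_0$, the support of $\nu$ coincides with $\varphi(\Omega_0)$ up to a Lebesgue null set; hence $\nu = c\cdot m_{\varphi(\Omega_0)}$, which is (b). Condition (c) then becomes the spectrality of $m_{\varphi(\Omega_0)}$ with spectrum $\Lambda$, equivalent to that of $\nu$ because the two differ by a positive scalar.

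For the reverse direction of (1), assume (a), (b), and (c). Condition (a) together with Lemma \ref{comp} yields $\mu$-essential injectivity; (b) identifies $\nu = c\cdot m_{\varphi(\Omega_0)}$; and (c) says that this multiple of $m_{\varphi(\Omega_0)}$ is a spectral measure with spectrum $\Lambda$. Theorem \ref{theorem_basis_frame}(1) concludes the argument. Part (2) is handled identically, substituting Theorem \ref{theorem_basis_frame}(2) and Theorem \ref{theorem_DL}(1) for their orthogonal counterparts. The latter now yields only a bounded density $m \le g \le M$ on the support of $\nu$, which is condition (b) of (2); and the equivalence between $E(\Lambda)$ being a frame for $L^2(\nu)$ and a Fourier frame for $L^2(\varphi(\Omega_0))$ follows from the norm equivalence $\|h\|^2_{L^2(\nu)} = \int |h|^2\,g\,dx$, which is comparable to $\int_{\varphi(\Omega_0)}|h|^2\,dx$ thanks to the two-sided density bounds, so the frame inequality for $\nu$ directly transfers through the substitution $f=hg$.

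The main technical hurdle I expect is the measure-theoretic bookkeeping required to identify the support of $\nu$ with $\varphi(\Omega_0)$ up to null sets and to ensure that $\varphi(\Omega_0)$ can be realized as a Borel set so that the phrases ``spectral set'' and ``Fourier frame for $L^2(\varphi(\Omega_0))$'' are well defined. This step combines the injectivity of $\varphi$ on $\Omega_0$, the change-of-variables form of the pushforward, and the Borel-image argument already used in the proof of Lemma \ref{comp}. Once this identification is secured, the remainder reduces to a clean translation between the language of pushforwards and the geometric language of $\Omega_0$ and $\varphi(\Omega_0)$.
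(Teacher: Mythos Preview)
Your proposal is correct and follows essentially the same route as the paper: reduce to Theorem \ref{theorem_basis_frame}, use Lemma \ref{comp} to produce $\Omega_0$, and invoke Theorem \ref{theorem_DL} (via the hypothesis $D^{-}(\Lambda)>0$) to force the pushforward to have constant (resp.\ bounded) density supported on $\varphi(\Omega_0)$. Your treatment of part (2) is in fact more explicit than the paper's, which simply says ``similar''; your norm-equivalence argument with the substitution $f=hg$ is exactly the right way to transfer the frame inequalities between $L^2(\nu)$ and $L^2(\varphi(\Omega_0))$.
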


\begin{proof}
In order to prove the first statement, note that, by Theorem
\ref{theorem_basis_frame}, the fact that the collection $\{e^{2\pi i
\lambda\cdot\varphi(x)}\}_{\lambda\in\Lambda}$ forms an orthogonal basis for
$L^{2}(\mu)$ is equivalent to the essential injectivity of $\varphi$ and the
fact that $\varphi_{\ast}m_{\Omega} $ is a spectral measure with spectrum
$\Lambda$. Thus the conditions in 1(a), 1(b) and 1(c) clearly imply that
$\{e^{2\pi i \lambda\cdot\varphi(x)}\}_{\lambda\in\Lambda}$ is an orthogonal
basis for $L^{2}(\mu)$.

Conversely, according to Theorem \ref{theorem_basis_frame} again, if that same
collection is an orthogonal basis for $L^{2}(\mu)$, $\varphi$ must be
$\mu$-essentially injective, i.e.~1(a) holds and denote by $\Omega_0$ to be the set of full measure so that $\varphi$ is injective on. Note that the second condition in Theorem \ref{theorem_basis_frame} implies that  $\varphi_{\ast}m_{\Omega_0}$ must be
a spectral measure with spectrum $\Lambda$. Since we are also assuming that
$D^{-}(\Lambda)>0$, part (2) of Theorem \ref{theorem_DL} then shows that, for
some $c>0$, $\varphi_{\ast}m_{\Omega_0} = c\cdot m_{D}$, where $D$ is an
essential support of $\varphi_{\ast}m_{\Omega_0}$. But it is direct to see that $\varphi(\Omega_0)$ is a support of $\varphi_{\ast}m_{\Omega_0}$ since 
\[
\varphi_{\ast}m_{\Omega_0} (\varphi(\Omega_0))= m_{\Omega_0} (\varphi^{-1}%
\varphi(\Omega_0)) = m(\Omega_0) =
\varphi_{\ast}m_{\Omega_0}({\mathbb{R}}^{d}).
\]

\medskip

The proof of (2) is similar to (1) by invoking Theorem \ref{theorem_DL} (2).
We will omit the details.

\end{proof}

It turns out that the condition on the Beurling density cannot be removed. If
we do not restrict the density of $\Lambda$, the measure $\varphi_{\ast
}m_{\Omega}$ may be singular with respect to the Lebesgue measure and the
space $L^{2}(\Omega)$  could actually admit an orthonormal basis of
exponentials with non-linear phases associated with a spectrum $\Lambda$
possessing a zero Beurling density (See Example \ref{example1}).

\section{Examples and Illustration}

\label{Fractal} In this section, we present some examples of exponentials with
non-linear phases that can form either a basis or a a frame for the Hilbert
space $L^{2}(\mu)$.

\subsection{Fractal examples}

Recall that the standard middle-fourth Cantor measure $\nu_{4}$ is the unique
measure supported on the middle-fourth Cantor set $K_{4}$ satisfying the
self-similar identity.
\[
\nu_{4}(E) = \frac12\nu_{4}(4E)+\frac12\nu_{4}(4E-2), \quad E\subset
\mathbb{R},\,\, \ E\,\,\mbox{Borel}.
\]
For this self-similar measure, Jorgensen and Pedersen \cite{JP1998} found that
$L^{2}(\nu_{4})$ admits an orthonormal basis basis of exponentials $\{e^{2\pi
i \lambda x}:\lambda\in\Lambda\}$, where
\[
\Lambda_{4}= \left\{  \sum_{i=0}^{n-1} 4^{i} a_{i}: a_{i} \in\{0,1\}, n =
1,2,3...\right\} ,
\]
showing thus that  $\nu_{4}$ is a spectral measure.  The support of $\nu_{4}$,
$K_{4}$, can be expressed  as the set
\[
K_{4} = \bigcap_{n=1}^{\infty} \bigcup_{j=1}^{2^{n}} I_{j,n}
\]
where $I_{j,n}$ are the basic intervals of the Cantor set.

\begin{example}
\label{example1} Let $\mu$ be the Lebesgue measure on $[0,1]$ and let
$\varphi:[0,1]\to K_{4}$ be the map
\[
\varphi\left(  \sum_{i=1}^{\infty} \frac{\epsilon_{i}}{2^{i}}\right)  =
\sum_{i=1}^{\infty} \frac{2\epsilon_{i}}{4^{i}},
\]
where $\epsilon_{i}\in\{0,1\}$. This map is well-defined except on the set of
dyadic rational numbers, which has measure zero. We note that for each
$n^{\mathrm{th}}$ basic interval $I_{j,n}$, $\varphi^{-1}(I_{j,n})$ is a
dyadic interval. The collection of all the preimages of the $n^{\mathrm{th}}$
basic intervals, $\varphi^{-1}(I_{j,n})$, $j=1,...,2^{n}$ are exactly all the
dyadic intervals at the $n^{\mathrm{th}}$ stage. Therefore, Lemma
\ref{comp}(1) is satisfied, and thus, $\varphi$ is $\mu$-essentially injective.
Moreover, as the measure $\nu_{4}$ and $\mu$ are completely determined by
 their values  on the basic intervals, we must have
\[
\varphi_{\ast}\mu= \nu_{4}.
\]
Hence, by Theorem \ref{theorem_basis_frame}, we have that
\[
\{e^{2\pi i \lambda\varphi(x)}: \lambda\in\Lambda_{4}\}
\]
is an orthonormal basis for $L^{2}[0,1]$.
\end{example}

\medskip

\begin{example}
(\textbf{Non-linear phased exponential bases for middle-third Cantor
measures}) Let $K_{3}$ be the middle-third Cantor set and $\nu_{3}$ be the
middle-third Cantor measure, which can be defined analogously by replacing $4$
with $3$ in the middle-fourth Cantor set definition. Jorgenesen and Pedersen
\cite{JP1998} proved that there is no exponential orthonormal basis for
$L^{2}(\nu_{3}). $ Nonetheless, we can define
\[
\varphi\left(  \sum_{i=1}^{\infty} \frac{\epsilon_{i}}{3^{i}}\right)  =
\sum_{i=1}^{\infty} \frac{\epsilon_{i}}{4^{i}},
\]
Using a similar proof in Example \ref{example1}, we obtain that we have that
\[
\{e^{2\pi i \lambda\varphi(x)}: \lambda\in\Lambda_{4}\}
\]
is an orthonormal basis for $L^{2}(\nu_{3})$. This example was also observed earlier in \cite{BK2010}.
\end{example}

\medskip

\subsection{Unit balls.}

Constructing an exponential Riesz basis for the open unit disc ${\mathbb{D}}$
has been a challenging problem in basis and frame theory. We are however, able
to offer an explicit non-linear phased exponential family forming an
orthonormal basis for $L^{2}({\mathbb{D}})$. Let $A,B $ be open sets on
${\mathbb{R}}^{d}.$ We say that $\varphi: A\to B$ is a \textit{$C^{1}%
$-diffeomorphism} if $\varphi$ is a $C^{1}$ bijective map from $A$ to $B$.
$\varphi$ is called \textit{measure-preserving} if $\varphi_{\ast}m_{A} =
m_{B}$. We need the following simple proposition.

\begin{proposition}
\label{change-of-variable} Let $A,B$ be open sets on ${\mathbb{R}}^{d}$ and
let $\varphi:A\to B$ be a $C^{1}$diffeomorphism. Let $J(\varphi)$ be the
Jacobian matrix of $\varphi$. Then $\varphi_{\ast}m_{A}= m_{B}$ if and only if
$|\det J(\varphi)| = 1$.
\end{proposition}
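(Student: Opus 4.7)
The plan is to read off the statement from the standard change of variables formula for $C^1$-diffeomorphisms. Write $\psi = \varphi^{-1}: B \to A$, which is itself a $C^1$-diffeomorphism. For any Borel set $E \subset B$, the definition of the pushforward gives
$$\varphi_{*}m_A(E) = m_A(\varphi^{-1}(E)) = \int_{\psi(E)} dx.$$
Applying the change of variables $x = \psi(y)$ on the right-hand side yields
$$\varphi_{*}m_A(E) = \int_E |\det J(\psi)(y)|\, dy.$$
Thus $\varphi_{*}m_A$ is absolutely continuous with respect to Lebesgue measure on $B$ with density $|\det J(\psi)|$.

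Next I would compare this to $m_B$. The equality $\varphi_{*}m_A = m_B$ holds if and only if $|\det J(\psi)(y)| = 1$ for Lebesgue-a.e.~$y \in B$. Since $\psi$ and $\varphi$ are $C^1$-inverses, the chain rule gives $J(\psi)(y) = J(\varphi)(\psi(y))^{-1}$, hence $|\det J(\psi)(y)| = 1/|\det J(\varphi)(\psi(y))|$. Consequently, the condition is equivalent to $|\det J(\varphi)(x)| = 1$ for Lebesgue-a.e.~$x \in A$. Because $\varphi$ is $C^1$ and $J(\varphi)$ is everywhere invertible (as $\varphi$ is a diffeomorphism), $|\det J(\varphi)|$ is a continuous nowhere-zero function on $A$, and so the a.e.~equality upgrades to everywhere equality. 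This closes the equivalence.

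The argument is essentially one invocation of the classical change of variables theorem, so there is no real obstacle; the only points that need a brief mention are (i) why absolute continuity follows automatically (it is the push-forward formula itself), and (ii) why we can pass from ``a.e.'' to ``everywhere'' when comparing Jacobians, which is simply continuity of $|\det J(\varphi)|$.
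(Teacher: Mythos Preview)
Your proof is correct and follows essentially the same route as the paper: both compute the density of $\varphi_{*}m_A$ with respect to Lebesgue measure on $B$ via the change of variables formula and compare it to $1$. The only cosmetic differences are that you work through the inverse $\psi$ explicitly (the paper applies the formula directly to $\varphi$, obtaining the factor $|\det J(\varphi)|^{-1}$), and you add the continuity argument upgrading the a.e.\ equality to everywhere, a point the paper leaves implicit.
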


\begin{proof}
We note that the change of variable formula is now valid. Therefore, we have
\[
\varphi_{\ast}m_{A} (E) = \int_{A} \chi_{E}(\varphi(x)) dx =\int\chi
_{E}(\varphi(x)) \chi_{B}(\varphi(x)) dx = \int\chi_{E\cap B}(x) |\det
J(\varphi(x))|^{-1}dx.
\]
Hence, if $|\det J(\varphi)| = 1$, then $\varphi_{\ast}m_{A}= m_{B}$ holds.
Conversely, if $\varphi_{\ast}m_{A}= m_{B}$ holds, then
\[
\int\chi_{E\cap B}(x) \left(  |\det J(\varphi(x))|^{-1} -1\right)  = 0
\]
for all Borel sets $E$. Hence, $|\det J(\varphi(x))|^{-1} =1$ holds a.e. on $B$.
\end{proof}

\medskip

It is possible to have a measure preserving $C^{1}$ diffeomorphism between
${\mathbb{D}}$ and the $\ell^{1}$ ball (which is a square) with the same area
as ${\mathbb{D}}$. The map was given by \cite[p.160]{Holhos} in which
$\varphi(x,y) = (X,Y)$ where
\[
X = \mbox{sgn}(x) \frac{\sqrt{x^{2}+y^{2}}}{\sqrt{2\pi}} \left(  \frac{\pi}%
{2}+ \sin^{-1}\left(  \frac{x^{2}-y^{2}}{x^{2}+y^{2}}\right)  \right)  ,
\]
\[
Y = \mbox{sgn}(y) \frac{\sqrt{x^{2}+y^{2}}}{\sqrt{2\pi}} \left(  \frac{\pi}%
{2}- \sin^{-1}\left(  \frac{x^{2}-y^{2}}{x^{2}+y^{2}}\right)  \right)  .
\]
Note that if $x^{2}+y^{2}=1$, then $|X|+|Y| = \sqrt{\pi/2}$. Hence, the
$\ell^{1}$-ball formed has a measure of $\pi$. Using this map, Theorem
\ref{theorem_basis_frame} and the fact that square admits an exponential
orthonormal basis $E(\Lambda)$. We have thus proved that

\begin{proposition}
There exists a map $\varphi$ and a set $\Lambda$ such that $E(\Lambda
,\varphi)$ form an orthonormal basis for $L^{2}({\mathbb{D}})$.
\end{proposition}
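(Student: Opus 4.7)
The plan is to take $\varphi$ to be precisely the explicit map $\varphi:\mathbb{D}\to S$ displayed just before the proposition, where $S=\{(X,Y):|X|+|Y|<\sqrt{\pi/2}\}$ is the open $\ell^1$ ball. Cited to Holho\c{s}, this map is a $C^1$-diffeomorphism of $\mathbb{D}$ onto $S$, and in particular it is injective and Borel measurable, so Lemma \ref{comp}(3) immediately yields that $\varphi$ is $\mu$-essentially injective for $\mu = m_{\mathbb{D}}$. The first task is then to verify that $|\det J(\varphi)| = 1$ almost everywhere, which can be done by a direct (if somewhat tedious) computation of the partial derivatives of the formulas for $X$ and $Y$; once this is established, Proposition \ref{change-of-variable} gives $\varphi_\ast m_{\mathbb{D}} = m_S$. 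Note that the areas already match ($\mathrm{area}(S) = (\sqrt{2\pi/2})^2 \cdot 2 / 2 = \pi = \mathrm{area}(\mathbb{D})$), which is a consistency check.

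Next, I would verify that $S$ is a spectral set. The set $S$ is a square rotated by $45^{\circ}$; explicitly, the linear map $R(u,v) = ((u-v)/\sqrt{2},(u+v)/\sqrt{2})$ sends the axis-aligned square $Q = (-a,a)^2$ with $a = \tfrac{1}{2}\sqrt{\pi/2}\cdot\sqrt{2} = \tfrac{1}{2}\sqrt{\pi}$ onto $S$. Since $Q$ is a spectral set (with the standard tensor-product spectrum $\Lambda_0 = \tfrac{1}{2a}\mathbb{Z}^2$), and orthogonal transformations preserve spectrality by the substitution $y = R^{-1}x$ inside the orthogonality integrals, the set $S$ is spectral with spectrum $\Lambda = R\Lambda_0$.

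With these two ingredients in place, I would apply Theorem \ref{theorem_basis_frame}(1): since $\varphi$ is $\mu$-essentially injective and $\varphi_\ast\mu = m_S$ is a spectral measure with spectrum $\Lambda$, the system $E(\Lambda,\varphi)$ is an orthonormal basis for $L^2(\mathbb{D})$. (Alternatively, one could invoke Theorem \ref{Fourier_Frame}(1) directly with $\Omega_0 = \mathbb{D}$ minus the measure-zero set where $\varphi$ may fail to be smooth, $c=1$, and $\varphi(\Omega_0) = S$.)

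The only real obstacle is the Jacobian computation: differentiating the expressions for $X$ and $Y$ requires some care because of the $\operatorname{sgn}$ and $\sin^{-1}$ factors, and one must check that the resulting determinant is identically $\pm 1$ away from the axes (where the formulas are smooth). Everything else — essential injectivity, spectrality of the rotated square, and the application of the main theorem — is essentially immediate from the results already proved in the paper.
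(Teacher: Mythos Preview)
Your proposal is correct and follows essentially the same route as the paper: use the Holho\c{s} map $\varphi$ from $\mathbb{D}$ to the $\ell^1$ ball $S$, invoke essential injectivity and measure preservation, observe that $S$ (a rotated square) is spectral, and conclude via Theorem~\ref{theorem_basis_frame}. The only difference is that the paper simply cites Holho\c{s} for the measure-preserving property rather than verifying $|\det J(\varphi)|=1$ by hand, so your proposed Jacobian computation is unnecessary extra work (though not wrong).
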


\medskip

\section{Classification of continuous phase functions where $d=1$}

In previous sections, we characterized Borel measurable function $\varphi$ so
that $E(\Lambda,\varphi)$ forms a basis or frame. From this section on, we
will focus our attention on $\varphi$ being continuous functions or
differentiable functions. In particular, we are interested in the case where
$\Lambda= {\mathbb{Z}}^{d}$ and $E(\Lambda,\varphi)$ forms an orthogonal basis
for $[0,1]^{d}$.

\medskip

We will first study $d=1$ in this section. Here, we can work on $\varphi$
being continuous with a mild assumption that $\varphi$ preserves measure zero
sets. We may also assume that $\varphi(0) = 0$ since otherwise, the
exponentials will just be differing by a phase factor $e^{2\pi i \lambda
\cdot\varphi(0)}$, which will not affect the basis property. Our main result
is as follows:

\begin{theorem}
\label{p1} Let $\varphi:[0,1]\rightarrow{\mathbb{R}}$ be a continuous function
that maps Lebesgue measure zero sets to measure zero sets. Suppose also that
$\varphi(0)=0$. Then $E(\mathbb{Z},\varphi)$ is an orthonormal basis for
$L^{2}[0,1]$ if and only if $\varphi(x)=\pm x$.
\end{theorem}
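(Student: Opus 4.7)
\smallskip

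\noindent\textbf{Plan.} The ``if'' direction is the classical Fourier orthonormal basis of $L^{2}[0,1]$, which is also invariant under $x\mapsto -x$; we focus on the converse. The starting point is Theorem \ref{Fourier_Frame}(1), applied with $\Omega=[0,1]$ and $\Lambda=\mathbb{Z}$ (note $D^{-}(\mathbb{Z})=1>0$). This will yield a Borel set $\Omega_{0}\subseteq[0,1]$ with $m([0,1]\setminus\Omega_{0})=0$ on which $\varphi$ is injective, with $\varphi_{\ast}m_{\Omega_{0}}=c\cdot m_{\varphi(\Omega_{0})}$ for some $c>0$, and $\varphi(\Omega_{0})$ spectral with spectrum $\mathbb{Z}$.

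The main obstacle, and the only place the null-set-preservation hypothesis is really used, will be upgrading ``essentially injective'' to ``monotone.'' I plan to argue by contradiction: if $\varphi$ is not monotone, then by continuity there exist $a<b<c$ in $[0,1]$ with $\varphi(b)>\max(\varphi(a),\varphi(c))$ (the reverse case is symmetric). Setting $J:=(\max(\varphi(a),\varphi(c)),\varphi(b))$ and applying the intermediate value theorem on $[a,b]$ and on $[b,c]$, the open sets $A:=\varphi^{-1}(J)\cap(a,b)$ and $B:=\varphi^{-1}(J)\cap(b,c)$ both surject onto $J$. Since $\varphi$ is injective on $\Omega_{0}$, the images $\varphi(A\cap\Omega_{0})$ and $\varphi(B\cap\Omega_{0})$ are disjoint subsets of $J$; the null-set-preservation hypothesis then ensures that each of them fills $J$ up to a null set, so each has measure $m(J)$, forcing the contradiction $2m(J)\leq m(J)$. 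This gives monotonicity, which is immediately upgraded to \emph{strict} monotonicity because a continuous monotone function that is constant on any subinterval cannot be essentially injective.

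Having reduced to the case of $\varphi$ continuous and strictly monotone, we may assume $\varphi$ is increasing (otherwise replace $\varphi$ by $-\varphi$, which leaves $E(\mathbb{Z},\varphi)$ invariant). Set $\alpha:=\varphi(1)$, so $\varphi:[0,1]\to[0,\alpha]$ is a homeomorphism and $\varphi(\Omega_{0})$ agrees with $[0,\alpha]$ up to a null set. Evaluating the identity $\varphi_{\ast}m_{[0,1]}=c\cdot m_{[0,\alpha]}$ on the half-intervals $[0,y]$ gives $\varphi^{-1}(y)=cy$, i.e.\ $\varphi$ is linear: $\varphi(x)=x/c$. Finally, $[0,\alpha]$ being spectral with spectrum $\mathbb{Z}$ forces $\alpha=1$, by the elementary fact that $\{e^{2\pi i n x}\}_{n\in\mathbb{Z}}$ is an orthonormal basis of $L^{2}[0,\alpha]$ iff $\alpha=1$; hence $c=1$ and $\varphi(x)=x$, completing the argument.
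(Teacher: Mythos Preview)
Your proposal is correct and follows essentially the same strategy as the paper: invoke the main characterization (Theorem~\ref{Fourier_Frame}) to reduce to showing that essential injectivity plus continuity plus null-set preservation force $\varphi$ to be strictly monotone, then read off $\varphi(x)=\pm x$ from the pushforward identity and the spectrality of the image interval. The paper packages the monotonicity step as a separate Lemma~\ref{lemma_essential} (using the $h\circ\varphi$ characterization of essential injectivity) together with a case analysis in Proposition~\ref{injective}, whereas you argue directly via the injectivity set $\Omega_{0}$ and a single ``local extremum'' configuration $a<b<c$; the two arguments are equivalent in content, yours being slightly more compressed.
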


We begin our proof with a general lemma.

\begin{lem}
\label{lemma_essential} Let $K$ be a bounded set of positive Lebesgue measure
and let $\varphi:K\rightarrow{\mathbb{R}}^{d}$ be a continuous function taking
measure zero sets to measure zero sets. Suppose there exists a set $E\subset
{\mathbb R}^d$, of positive measure satisfying $E=\varphi(U_{1})=\varphi(U_{2})$ with
$U_{1}\cap U_{2}=\emptyset$ and $U_{1},U_{2}$ are of positive Lebesgue measure
in $K$. Then $\varphi$ cannot be $\mu$-essentially injective.
\end{lem}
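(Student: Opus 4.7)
The plan is to argue by contradiction, invoking the equivalent formulation of essential injectivity given in Lemma \ref{comp}(3). Assume that $\varphi$ is $\mu$-essentially injective; then there exists a Borel set $\mathcal{N}\subset K$ with Lebesgue measure zero such that $\varphi$ is injective on $K\setminus\mathcal{N}$. The goal is to derive a contradiction with the assumption $|E|>0$ by showing that $E$ is forced to be contained, up to null sets, in the image of a null set.

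The key step is to establish the inclusion
\[
E\subset \varphi(U_1\cap\mathcal{N})\cup\varphi(U_2\cap\mathcal{N}).
\]
To prove this, fix $y\in E$ and assume, toward a contradiction, that $y$ lies in neither set on the right. Since $y\in E=\varphi(U_1)=\varphi(U_2)$, it has at least one preimage in $U_1$ and at least one in $U_2$. The failure $y\notin \varphi(U_i\cap\mathcal{N})$ for $i=1,2$ means that not every preimage of $y$ in $U_i$ can lie in $\mathcal{N}$, so one can select $x_1\in U_1\setminus\mathcal{N}$ and $x_2\in U_2\setminus\mathcal{N}$ with $\varphi(x_1)=\varphi(x_2)=y$. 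Because $U_1\cap U_2=\emptyset$, we have $x_1\neq x_2$, contradicting injectivity of $\varphi$ on $K\setminus\mathcal{N}$. Hence each $y\in E$ must satisfy $y\in \varphi(U_1\cap\mathcal{N})$ or $y\in\varphi(U_2\cap\mathcal{N})$, establishing the inclusion.

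To conclude, note that $U_1\cap\mathcal{N}$ and $U_2\cap\mathcal{N}$ are both Lebesgue null subsets of $K$, so the hypothesis that $\varphi$ maps measure-zero sets to measure-zero sets gives that $\varphi(U_1\cap\mathcal{N})$ and $\varphi(U_2\cap\mathcal{N})$ are null. Combined with the inclusion above, this forces $|E|=0$, contradicting the positivity of $|E|$. The main delicate point is really the bookkeeping in the middle paragraph: one must convert the assumption "$y\notin\varphi(U_i\cap\mathcal{N})$" into the existence of a preimage of $y$ in $U_i\setminus\mathcal{N}$, which is exactly where the surjectivity built into $E=\varphi(U_i)$ is used in tandem with the Lusin $N$-type hypothesis on $\varphi$.
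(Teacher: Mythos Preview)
Your proof is correct. The paper follows the same contradiction strategy but invokes characterization (2) of Lemma~\ref{comp} rather than (3): it takes $f(x)=x$, obtains $h\in L^2(\nu)$ with $x=h(\varphi(x))$ $\mu$-a.e., sets $\mathcal{F}_i=\{x\in U_i:h(\varphi(x))=x\}$ and $\mathcal{K}_i=\varphi(\mathcal{F}_i)$, and derives $x=h(a)=y$ for $a\in\mathcal{K}_1\cap\mathcal{K}_2$, which is exactly your $x_1\ne x_2$ collision in different clothing. Your route via condition (3) is a bit more economical: you never introduce $h$, and you sidestep the measurability discussion for $\mathcal{K}_i$ (where the paper explicitly uses continuity of $\varphi$), since your inclusion $E\subset\varphi(U_1\cap\mathcal{N})\cup\varphi(U_2\cap\mathcal{N})$ only needs the Lusin-$N$ hypothesis to conclude $|E|=0$.
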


\begin{proof}
We argue by contradiction. Suppose that $\varphi$ is $\mu$-essentially injective.
Letting $f(x)=x$, we have then $f\in L^{2}(K)$. Since $\varphi$ is $\mu$-essentially
injective, one can find
 $h\in L^{2}(\nu)$,
where $\nu=\varphi_*\mu$,
  such that $x=h(\varphi(x))$
for a.e. $x\in K$. Let $E,U_{1},U_{2}$ be sets satisfying the conditions in
the statement of the lemma. Then
\[
{\mathcal{F}}_{i} = \{x\in U_{i}:h(\varphi(x))=x\}
\]
has full Lebesgue measure in $U_{i}$ for $i=1,2.$ Consider the following
sets:
\[
{\mathcal{K}}_{1}=\varphi({\mathcal{F}}_{1}),\ {\mathcal{K}}_{2}%
=\varphi({\mathcal{F}}_{2}).
\]
Since $\varphi$ maps measure zero sets to measure zero sets, ${\mathcal{K}%
}_{1}$ and ${\mathcal{K}}_{2}$ have full Lebesgue measure in $E$. Moreover,
$\varphi$ is also continuous, so ${\mathcal{K}}_{i}$ are measurable ( since we
can decompose ${\mathcal{F}}_{i}$ into $F_{\sigma}$-sets and measure zero
sets). Thus, $m({\mathcal{K}}_{1}\cap{\mathcal{K}}_{2}) =m( E) $. Next, given
$a\in{\mathcal{K}}_{1}\cap{\mathcal{K}}_{2}$, it is clear that $a=\varphi
(x)=\varphi(y)$ for some $x\in {\mathcal F}_{1}\subset U_1$ and $y\in {\mathcal F}_{2}\subset U_2$. Hence,
\[
x=h(\varphi(x))=h(a)=h(\varphi(y))=y,
\]
and this contradicts the fact that $U_{1}$ and $U_{2}$ are disjoint sets. This
completes the proof of the lemma.
\end{proof}

\begin{proposition}
\label{injective} Let $\varphi:[0,1]\rightarrow{\mathbb{R}}$ be a continuous
function mapping measure zero sets to measure zero sets. If $\varphi$ is
$\mu$-essentially injective, then $\varphi$ is injective.
\end{proposition}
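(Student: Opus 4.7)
The plan is to prove the contrapositive: assuming $\varphi$ is not injective on $[0,1]$, I will exhibit disjoint positive-measure sets $U_1,U_2\subset[0,1]$ with $\varphi(U_1)=\varphi(U_2)=E$ for some $E$ of positive Lebesgue measure, and then invoke Lemma \ref{lemma_essential} to conclude that $\varphi$ is not $\mu$-essentially injective. So fix $a<b$ in $[0,1]$ with $\varphi(a)=\varphi(b)$ and split into two cases according to whether $\varphi$ is constant on $[a,b]$.

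I would first dispose of the degenerate subcase in which $\varphi$ is constant, say equal to $c$, on $[a,b]$. Lemma \ref{lemma_essential} does not directly apply here because the common image $\{c\}$ is null, but essential injectivity fails more directly: the identity function $f(x)=x$ lies in $L^2[0,1]$, yet $\mu$-essential injectivity would supply an $h\in L^2(\nu)$ with $x=h(\varphi(x))=h(c)$ for a.e.\ $x\in[a,b]$, forcing the nonconstant identity to agree with a single constant on a positive-measure set.

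Otherwise $\varphi$ is not constant on $[a,b]$. By continuity it attains a maximum $M$ and minimum $m$ on $[a,b]$ with $M>m$, and at least one of these extrema differs from the common boundary value $\varphi(a)=\varphi(b)$; without loss of generality $M>\varphi(a)$ (the symmetric case is identical). Pick $c\in(a,b)$ with $\varphi(c)=M$; the intermediate value theorem applied separately on $[a,c]$ and $[c,b]$ shows that both intervals map onto supersets of $[\varphi(a),M]$. Choose any open subinterval $E\subset(\varphi(a),M)$ of positive measure, and set $U_1=\varphi^{-1}(E)\cap(a,c)$ and $U_2=\varphi^{-1}(E)\cap(c,b)$. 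By construction $U_1,U_2$ are disjoint open sets with $\varphi(U_1)=\varphi(U_2)=E$ (the endpoints $a,c,b$ are excluded precisely because $\varphi(a),\varphi(c)\notin E$), and the hypothesis that $\varphi$ preserves null sets forces $U_1,U_2$ to have positive measure, for otherwise $\varphi(U_i)=E$ would itself be null. Lemma \ref{lemma_essential} now delivers the contradiction.

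The main subtlety, and the reason the null-set-preserving assumption is essential, is precisely the step transferring positive measure of $E$ back to $U_1,U_2$; without Luzin's N-property a continuous function could in principle fold a Lebesgue-null set onto a positive-measure image and evade the hypotheses of Lemma \ref{lemma_essential}. The case split is forced by the same consideration: when $\varphi$ is locally constant on $[a,b]$, the shared image degenerates to a single point and Lemma \ref{lemma_essential} becomes unavailable, so this pocket must be handled by the separate direct argument above.
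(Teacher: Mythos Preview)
Your proof is correct and follows the same overall strategy as the paper---reduce to Lemma~\ref{lemma_essential} by producing disjoint positive-measure sets with a common positive-measure image---but your construction is tidier. The paper first rules out $\varphi$ being constant on any subinterval, then picks auxiliary points $x_1,y_1$ near the two preimages and splits into two cases depending on the relative order of $\varphi(x_1),\varphi(y_1),c$; Case~(2) in particular requires a further argument using the interval between the two $\delta$-neighborhoods. Your use of an interior extremum of $\varphi$ on $[a,b]$ collapses all of this into a single step: once $M>\varphi(a)=\varphi(b)$ is attained at an interior $c$, the two halves $[a,c]$ and $[c,b]$ automatically cover $[\varphi(a),M]$ by the intermediate value theorem, and the construction of $U_1,U_2,E$ is immediate.

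One small remark on your closing commentary: in your specific construction the sets $U_1,U_2$ are open and nonempty, so they have positive Lebesgue measure automatically---you do not actually need the Luzin-$N$ property to pull positive measure back from $E$. The genuine role of the null-set-preserving hypothesis lies inside the proof of Lemma~\ref{lemma_essential} itself (it is used there to ensure that $\varphi(\mathcal{F}_i)$ still has full measure in $E$), and you need it here only to meet that lemma's standing assumption. This does not affect the validity of your argument, but your explanation of where the hypothesis enters is slightly misplaced.
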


\begin{proof}
We first observe that $\varphi$ cannot be a constant function on any
non-degenerate subinterval $I\subset\lbrack0,1]$. Otherwise, let $h\in L^{2}(\nu)$,
where $\nu=\varphi_*\mu$,
 such that $x=h(\varphi(x))$ a.e. on $[0,1]$. If
$\varphi(x)=c$ for $x\in I$, we would obtain that $h(c)=x$ for almost every
$x\in I\subset[0,1]$, clearly a contradiction.

\medskip

Suppose that $\varphi$ is not injective. There exist $x_{0},y_{0}$ such that
$x_{0}\neq y_{0}$ and $\varphi(x_{0})=\varphi(y_{0})=c$. As $\varphi$ is
continuous, given any $\epsilon>0$, we can find $\delta>0$ such that
$\varphi(x)\in(c-\epsilon,c+\epsilon)$ and $\varphi(y)\in(c-\epsilon
,c+\epsilon)$ whenever $|x-x_{0}|<\delta$ and $|y-y_{0}|<\delta$. Let
$x_{1}\in(x_{0}-\delta,x_{0}+\delta)$ and $y_{1}\in(y_{0}-\delta,y_{0}%
+\delta)$. For sufficiently small $\delta$, we may assume that $(x_{0}%
-\delta,x_{0}+\delta)$ and $(y_{0}-\delta,y_{0}+\delta)$ are disjoint. Since $\varphi$ cannot be constant on any intervals, we may
assume that $\varphi(x_1),\varphi(y_1)$ are not equal to $c$ and  $\varphi(x_{1})<\varphi(y_{1})$. For
the subsequent analysis, we define $I_{x,y}$ and $I_{x,y}^{\circ}$ respectively as the closed and open interval with
endpoints $x,y$.

\medskip

\noindent\textbf{Case (1): $\varphi(x_{1})<\varphi(y_{1})<c$ or $c<\varphi
(x_{1})<\varphi(y_{1})$.} Since these two cases are symmetric, there is no
loss of generality in only addressing one of them. By the intermediate value
theorem applied to $\varphi$ defined on $I_{x_{1},x_{0}}$, $\varphi$ assumes
all values in the interval $[\varphi(x_{1}),c]$ from ${I_{x_{1}%
,x_{0}}}$. Hence,  $\varphi({I_{x_{1},x_{0}}}) \supset[\varphi(x_{1}%
),c]\supset [\varphi(y_1),c]$. Similarly, we also have $\varphi(I_{y_{1},y_{0}})
\supset[\varphi(y_{1}),c]$. Note that $[\varphi(y_{1}),c]$ is now a
non-degenerate subinterval of $[\varphi(x_{1}),c]$. Let $E=(\varphi(y_{1}),c)$
and let $U_{1}=I_{x_{1},x_{0}}^{\circ}\cap\varphi^{-1}(E)$, $U_{2}=I_{y_{1},y_{0}}^{\circ}%
\cap\varphi^{-1}(E)$. Then $U_{1},U_{2}$ has positive Lebesgue measure (since
they are open) and disjoint. Moreover, $\varphi(U_{1}) = \varphi(U_{2}) = E$.
All assumptions in Lemma \ref{lemma_essential} are satisfied. Hence, $\varphi$
cannot be $\mu$-essentially injective. This completes the proof of Case (1).

\medskip

\noindent\textbf{Case (2): $\varphi(x_{1})<c<\varphi(y_{1})$.} We may assume
that $\varphi[x_{0}-\delta,x_{0}+\delta]\subset\lbrack c-\epsilon,c]$ and
$\varphi[y_{0}-\delta,y_{0}+\delta]\subset\lbrack c,c+\epsilon]$. Otherwise,
we can select $x_{1},y_{1}$ to satisfy the assumptions in Case (1). We can
also assume that the endpoints do not take the value $c$, otherwise, we choose
a smaller $\delta$. By the intermediate value theorem, $\varphi(
{I_{x_{1},x_{0}}})\supset[\varphi(x_{1}),c]$. On the other hand, consider the
interval $(x_{0} +\delta,y_{0}-\delta)$ if $x_0<y_0$ and  $(y_{0} +\delta,x_{0}-\delta)$ if $y_0<x_0$. We only consider the first case since the case $y_0<x_0$ is similar.  Intermediate Value Theorem tells us
that all values in the interval $\varphi[x_{0}+\delta,y_{0}-\delta
]\supset[\varphi(x_{0}+\delta),\varphi(y_{0}-\delta)]$. Note that the interval $E=(\max\{\varphi(x_{1}),\varphi(x_{0}%
+\delta)\},c)$ is non-degenerate (since endpoints do not take the value $c$). And the set $U_{1}=I_{x_{1},x_{0}}^{\circ}\cap\varphi^{-1}(E)$ and $U_{2}%
=(x_{0}+\delta,y_{0}-\delta)\cap\varphi^{-1}(E)$ satisfies the assumption that $\varphi(U_1) = \varphi(U_2) = E$ with positive Lebesgue measure. The assumption in  
Lemma \ref{lemma_essential} are all satisfied, so $\varphi$ cannot be $\mu$-essentially injective.
This completes the proof.
\end{proof}

\noindent\textit{Proof of Theorem \ref{p1}.} The statement that $\varphi
(x)=\pm x$ implies that $\left\{  e^{2\pi ik\cdot\varphi(x)}:k\in{\mathbb{Z}%
}\right\}  $ is an orthonormal basis for $L^{2}[0,1]$ is evident and we shall
focus on its converse. So, let us assume that $E({\mathbb{Z}}^{d},\varphi)$ is
an orthonormal basis for $L^{2}[0,1].$ Then $\varphi$ must be $\mu$-essentially
injective by Theorem \ref{theorem_basis_frame}. Proposition \ref{injective}
implies that $\varphi$ must be injective. As $\varphi$ is continuous,
$\varphi$ is monotone.

Without loss of generality, we can assume $\varphi$ is increasing. Then
$\varphi[0,1] = [0,\varphi(1)]$. Hence, $\varphi_{\ast}m_{[0,1]} = c\cdot
m_{[0,\varphi(1)]}$ by Theorem \ref{theorem_basis_frame} 1(b). As
$E({\mathbb{Z}})$ is an exponential orthonormal basis for $m_{[0,\varphi(1)]}%
$, we must have the $\varphi(1) = m([0,\varphi(1)]) = 1$ and $c=1$. Thus, $t =
m_{[0,1]}(0,t) = \varphi_{\ast}m_{[0,1]}((0,t))=\varphi^{-1}(t)$ and it
follows that $\varphi(x)=x$. The case that $\varphi$ is decreasing is similar
and $\varphi(x) = -x$. \qquad$\Box$

\medskip

Theorem \ref{p1} guarantees that under a weak condition imposed on $\varphi$, linear functions are the only maps $\varphi$ for which the
system $E(\Lambda,\varphi)$ is an orthonormal basis for $L^{2}\left[
0,1\right]  $. As it is well-known that all Lipschitz functions map measure
zero sets to measure zero sets, so Theorem \ref{p1} applies.

\medskip

We finally remark that $\mu$-essentially injectivity and injectivity are not the
same concept even if $\varphi$ is a continuous function. In search of the
literature, we found that Foschini \cite{F1970} constructed a continuous
$\mu$-essentially injective function that is not monotone in any intervals via
Wiener process. We do not know if this function can produce an exponential
orthonormal basis with a non-linear phase. However, it shows that the
assumption about preservation of measure zero sets in Proposition \ref{injective} cannot be removed.

%

\medskip

\section{Non-linear phase in higher dimensions}

\subsection{A sufficient condition.}

In this section, we will investigate the construction of exponential
orthogonal bases with non-linear phases in higher dimensions. Our
investigation reveals that in the multidimensional settings, we cannot expect
a result that is analogous to Theorem \ref{p1} (see Theorem \ref{unipotent}).

Our first result in this section provides a large class of functions $\varphi$
such that $E({\mathbb{Z}}^{d},\varphi)$ is an orthonormal basis for
$L^{2}\left(  [0,1]^{d}\right)  $for arbitrary $d.$


\begin{theorem}
\label{unipotent} Let $\varphi:[0,1]^{d}\rightarrow\mathbb{R}^{d}$ and let $x
= (x_{1},...,x_{d})$ be such that
\[
\varphi\left(  x\right)  =\left(  x_{1}+l_{1}\left(  x_{2},\cdots
x_{d}\right)  ,x_{2}+l_{2}\left(  x_{3},\cdots,x_{d}\right)  ,\cdots
,x_{d-1}+l_{d-1}\left(  x_{d}\right)  ,x_{d}\right)
\]
for some $C^{1}$-functions $l_{1},l_{2},\cdots,l_{d-1}$. Then $E({\mathbb{Z}%
}^{d},\varphi)$ is an orthonormal basis for $L^{2}[0,1]^{d}$.
\end{theorem}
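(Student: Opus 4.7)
The plan is to verify the two hypotheses of Theorem \ref{theorem_basis_frame}(1) applied to $\Lambda = {\mathbb{Z}}^d$: namely, that $\varphi$ is $m_{[0,1]^d}$-essentially injective and that the pushforward $\nu := \varphi_{\ast} m_{[0,1]^d}$ is a spectral measure with spectrum ${\mathbb{Z}}^d$. Injectivity is immediate from the triangular structure of $\varphi$: if $\varphi(x) = \varphi(y)$, reading coordinates bottom-up gives $x_d = y_d$ from the last entry; the $(d-1)$-st entry then reads $x_{d-1} + l_{d-1}(x_d) = y_{d-1} + l_{d-1}(y_d)$, forcing $x_{d-1} = y_{d-1}$; continuing upward yields $x = y$. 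Hence $\varphi$ is injective on the entire cube, and in particular $m_{[0,1]^d}$-essentially injective by Lemma \ref{comp}.

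Next, the Jacobian matrix $J(\varphi)$ is upper triangular with $1$'s on the diagonal, since $\partial \varphi_i/\partial x_i = 1$ and $l_i$ depends only on $x_{i+1},\ldots,x_d$. Thus $|\det J(\varphi)| \equiv 1$, and the restriction of $\varphi$ to the open cube $(0,1)^d$ is a $C^1$-diffeomorphism onto its image $D := \varphi((0,1)^d)$. Proposition \ref{change-of-variable} then yields $\varphi_{\ast} m_{(0,1)^d} = m_D$, and since $\partial([0,1]^d)$ has Lebesgue measure zero, $\nu = m_D$.

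It remains to show $D$ has ${\mathbb{Z}}^d$ as a spectrum, which I will establish by proving that $D$ is (up to a null set) a fundamental domain for the ${\mathbb{Z}}^d$-translation action on ${\mathbb{R}}^d$; Fuglede's easy implication (tiling $\Rightarrow$ spectrality with ${\mathbb{Z}}^d$) then finishes this step. Given $y \in {\mathbb{R}}^d$, I produce the unique pair $(x,n) \in [0,1)^d \times {\mathbb{Z}}^d$ with $\varphi(x) + n = y$ by the same triangular recursion, read top-down: set $x_d = \{y_d\}$, $n_d = \lfloor y_d \rfloor$, then inductively, having fixed $x_{j+1},\ldots,x_d$, set
\[
x_j \;=\; \{\,y_j - l_j(x_{j+1},\ldots,x_d)\,\}, \qquad n_j \;=\; \lfloor\, y_j - l_j(x_{j+1},\ldots,x_d)\,\rfloor.
\]
Uniqueness at each step forces uniqueness of the whole decomposition. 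Hence $D$ tiles ${\mathbb{R}}^d$ by ${\mathbb{Z}}^d$-translates.

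Combining the three ingredients with Theorem \ref{theorem_basis_frame}(1), the system $E({\mathbb{Z}}^d,\varphi)$ is an orthonormal basis for $L^2[0,1]^d$. The only genuine technical step is the tiling argument of the third paragraph; injectivity and the evaluation of the Jacobian are direct consequences of the upper-triangular form of $\varphi$, and the transfer to $L^2(\mu)$ is a black-box application of the main theorem.
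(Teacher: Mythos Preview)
Your proof is correct and follows essentially the same route as the paper: verify injectivity via the triangular structure, compute the unit Jacobian to identify the pushforward, and show the image tiles ${\mathbb{R}}^d$ by ${\mathbb{Z}}^d$ so that Fuglede's theorem applies; the conclusion then comes from Theorem~\ref{theorem_basis_frame}. The only minor variation is in the tiling step, where you construct the explicit bijection $y\mapsto(x,n)$ via the floor/fractional-part recursion, whereas the paper instead shows $\varphi([0,1)^d)$ is a packing set of volume $1$ and invokes the packing-plus-volume criterion---both arguments are equally short and yield the same conclusion.
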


\begin{remark}
Let $M$ be a square matrix of order $d$ with integer entries satisfying $|\det
M|=1.$ Since ${\mathbb{Z}}^{d}$ is invariant under the action of the transpose
of $M,$ it is easy to verify that $E({\mathbb{Z}}^{d},M\varphi)$ is an
orthonormal basis for $L^{2}[0,1]^{d}$ if and only if $E({\mathbb{Z}}%
^{d},\varphi)$ is an orthonormal basis for $L^{2}[0,1]^{d}.$ More generally
for arbitrary $A\in GL\left(  d,\mathbb{R}\right)  $, the system
$E({\mathbb{Z}}^{d},A\varphi)$ is an orthonormal basis for $L^{2}[0,1]^{d}$ if
and only if $E(A^{T}{\mathbb{Z}}^{d},\varphi)$ is an orthonormal basis for
$L^{2}[0,1]^{d}.$
\end{remark}

In setting the stage for the proof of Theorem \ref{unipotent}, recall the
following. Let $\Omega$ be a subset of ${\mathbb{R}}^{d}.$ We say that
$\Omega$ is a translational tile by a set ${\mathcal{J}}$ if
\[
m((\Omega+t)\cap(\Omega+{t}^{\prime}))=0\ \forall t\neq t^{\prime}%
\in{\mathcal{J}}\ \mbox{and}\ \bigcup_{t\in{\mathcal{J}}}(\Omega
+t)={\mathbb{R}}^{d}.
\]
When the first condition described above holds, we say that $\Omega$ is a
\textit{packing set}. Given a lattice $\Gamma=A({\mathbb{Z}}^{d})$ where $A$
is an invertible matrix of order $d,$ the quantity $|\det(A)|$ is called the
volume of $\Gamma,$ and it is not difficult to verify that \textit{if $\Omega$
is a packing set by a lattice $\Gamma$ and $m(\Omega)=|\det(A)|$, then
$\Omega$ tiles ${\mathbb{R}}^{d}$ translationally with $\Gamma$} (see e.g.
\cite[Theorem 2.1]{GLW2015}). Furthermore, the following is a well-known
result due to Fuglede \cite{Fug74}

\begin{theorem}
\label{fuglede} Let $\Gamma$ be a full-rank lattice of ${\mathbb{R}}^{d}$.
Then $L^{2}(\Omega)$ admits an exponential orthogonal basis $E(\Gamma)$ if and
only if $\Omega$ is a translational tile by the dual lattice of $\Gamma$ (i.e.
$\Gamma^{\perp}: = \{x\in{\mathbb{R}}^{d}: x\cdot\gamma\in{\mathbb{Z}}
\ \forall\ \gamma\in\Gamma\}$).
\end{theorem}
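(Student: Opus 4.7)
The plan is to reduce to the standard case $\Gamma = \mathbb{Z}^d$ via a linear change of variables, and then analyze orthogonality and completeness through the $\mathbb{Z}^d$-periodization of $\chi_\Omega$. Writing $\Gamma = A\mathbb{Z}^d$ for an invertible $A$, one has $\Gamma^\perp = (A^T)^{-1}\mathbb{Z}^d$. The substitution $y = A^T x$ gives, up to a constant scalar, a unitary map $L^2(\Omega) \to L^2(A^T\Omega)$ sending the exponential with frequency $\gamma = Ak$ to $e^{2\pi i k\cdot y}$, and it converts ``$\Omega$ tiles by $\Gamma^\perp$'' into ``$A^T\Omega$ tiles by $\mathbb{Z}^d$''. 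Thus the theorem reduces to the statement that $E(\mathbb{Z}^d)$ is an orthogonal basis of $L^2(\Omega)$ if and only if $\Omega$ tiles $\mathbb{R}^d$ by $\mathbb{Z}^d$.

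Next, set $g(x) = \sum_{k \in \mathbb{Z}^d} \chi_\Omega(x+k)$. A direct computation gives $\langle e^{2\pi i k\cdot x}, e^{2\pi i k'\cdot x}\rangle_{L^2(\Omega)} = \widehat{\chi_\Omega}(k'-k)$, while the Fourier coefficients of $g$ on $[0,1)^d$ are precisely $\widehat{\chi_\Omega}(k)$. Hence $E(\mathbb{Z}^d)$ is orthogonal in $L^2(\Omega)$ if and only if these Fourier coefficients vanish off the origin, i.e., $g$ is essentially constant, equal to $m(\Omega)$. Because $g$ is a.e.~a nonnegative integer, this forces $m(\Omega) = n$ for some $n \in \mathbb{Z}^+$ and $g \equiv n$ a.e.; geometrically, $\Omega$ is a measurable $n$-fold cover of the torus $\mathbb{R}^d/\mathbb{Z}^d$.

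To close the argument, I would use a measurable selection to decompose $\Omega$ (mod null sets) as a disjoint union $\Omega_1 \sqcup \cdots \sqcup \Omega_n$ with each $\Omega_j$ a measurable fundamental domain for $\mathbb{Z}^d$ in $\mathbb{R}^d$. The natural map $f \mapsto (F_1,\ldots,F_n)$, obtained by pulling each $f|_{\Omega_j}$ back to $[0,1)^d$ via the $\mathbb{Z}^d$-bijection, is then a unitary isomorphism $L^2(\Omega) \to L^2([0,1)^d)^{\oplus n}$ that sends every $e^{2\pi i k\cdot x}$ to the ``diagonal'' element $(e^{2\pi i k\cdot y},\ldots,e^{2\pi i k\cdot y})$. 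Since $\{e^{2\pi i k\cdot y}\}_{k \in \mathbb{Z}^d}$ is an ONB of $L^2([0,1)^d)$, the closed span of $E(\mathbb{Z}^d)$ in $L^2(\Omega)$ corresponds exactly to the diagonal copy of $L^2([0,1)^d)$, which exhausts the direct sum if and only if $n=1$. In that case $\Omega=\Omega_1$ tiles; conversely, if $\Omega$ already tiles then $n=1$ automatically and the same identification shows $E(\mathbb{Z}^d)$ is an ONB by Parseval's identity for the Fourier basis on $[0,1)^d$. The main technical step is the measurable selection producing $\Omega_1,\ldots,\Omega_n$, which I would realize concretely by setting $\Omega_j = \{y + k_j(y) : y \in [0,1)^d\}$, where $k_j(y)$ is the $j$-th element of $\{k\in\mathbb{Z}^d : y+k \in \Omega\}$ under a fixed lexicographic ordering of $\mathbb{Z}^d$; this yields Borel-measurable selections and hence measurable $\Omega_j$.
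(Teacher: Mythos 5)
Your proposal is correct. Note first that the paper itself gives no proof of this statement: it is quoted as a well-known result and attributed to Fuglede's 1974 paper, so there is no internal argument to compare yours against. On its own merits, your argument is the standard and complete one for the lattice case. The reduction $y=A^{T}x$ correctly converts $E(\Gamma)$ over $\Omega$ into $E(\mathbb{Z}^{d})$ over $A^{T}\Omega$ and converts tiling by $\Gamma^{\perp}=(A^{T})^{-1}\mathbb{Z}^{d}$ into tiling by $\mathbb{Z}^{d}$. The identification of the inner products $\langle e_{k},e_{k'}\rangle_{L^{2}(\Omega)}=\widehat{\chi_{\Omega}}(k'-k)$ with the Fourier coefficients of the periodization $g=\sum_{k}\chi_{\Omega}(\cdot+k)$ shows that orthogonality is equivalent to $g\equiv m(\Omega)$ a.e., hence to $g\equiv n$ for an integer $n\ge 1$ (here the implicit finiteness of $m(\Omega)$, which is forced anyway by the exponentials lying in $L^{2}(\Omega)$, guarantees $g<\infty$ a.e. and lets you use uniqueness of Fourier coefficients for $L^{1}$ functions on the torus). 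Your measurable selection $\Omega_{j}=\{y+k_{j}(y):y\in[0,1)^{d}\}$ is sound: each set $\{y:k_{j}(y)=k\}$ is a countable Boolean combination of the measurable sets $(\Omega-k)\cap[0,1)^{d}$, the $\Omega_{j}$ are pairwise disjoint fundamental domains covering $\Omega$ up to a null set, and the resulting unitary $L^{2}(\Omega)\to L^{2}([0,1)^{d})^{\oplus n}$ sends $E(\mathbb{Z}^{d})$ onto the diagonal copy of the Fourier basis, whose closed span is the diagonal subspace; this exhausts the direct sum exactly when $n=1$, i.e.\ exactly when $\Omega$ tiles. The converse direction via Parseval is immediate. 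The only cosmetic caveat is that ``lexicographic ordering of $\mathbb{Z}^{d}$'' should be read as a fixed enumeration of $\mathbb{Z}^{d}$ (the literal lexicographic order is not a well-order), but that does not affect the argument.
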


\medskip

\noindent\textit{Proof of Theorem \ref{unipotent}.} In light of Theorem
\ref{theorem_basis_frame}, to prove Theorem \ref{unipotent}, it suffices to
establish the following: (i) $\varphi$ is $\mu$-essentially injective, (ii)
$\varphi_{\ast}m_{[0,1]^{d}}=m_{\varphi([0,1]^{d})}$ and (iii) $E({\mathbb{Z}%
}^{2})$ is an orthonormal basis for $L^{2}(\varphi([0,1]^{d})$.

For the first part, we only need to show that $\varphi$ is an injective map.
Indeed, assuming that $\varphi(x)=\varphi(y)$, the last coordinates of $x$ and
$y$ are equal to each other. Next, by assumption, $x_{d-1}+l_{d-1}\left(
x_{d}\right)  =y_{d-1}+l_{d-1}\left(  y_{d}\right)  $ and straightforward
calculations show that $x_{d-1}=y_{d-1}$. Proceeding in this fashion, we
establish $x=y,$ showing that $\varphi$ is injective.

To show (ii), we first verify that the Jacobian of $\varphi$ is equal to one.
Indeed, since
\[
\varphi\left(  x\right)  =x+\left(  l_{1}\left(  x_{2},\cdots x_{d}\right)
,l_{2}\left(  x_{3},\cdots,x_{d}\right)  ,\cdots,l_{d-1}\left(  x_{d}\right)
,0\right)
\]
and the Jacobian of $\varphi$ is a unipotent matrix. i.e. $J_{\varphi}\left(
x\right)  =I+N\left(  x\right)  $ for some matrix-valued function $N(x)$ such
that it is strictly upper triangular. Hence, $J_{\varphi}$ is upper-triangular
with all ones on its diagonal and $\det J_{\varphi}=1$. Thus, $\varphi$
defines a local diffeomorphism and is also injective, and as such $\varphi$
induces a $C^{1}$-diffeomorphism between its domain and its range.
Consequently, $\varphi_{\ast}m_{[0,1]^{d}}=m_{\varphi([0,1]^{d})}.$

Finally, to prove (iii), it suffices to show that (see to Theorem
\ref{fuglede}) $\Omega:=\varphi\left(  \lbrack0,1]^{d}\right)  $ is a
translational tile for ${\mathbb{R}}^{d}$.
To this end, we first claim that $\left(  \varphi\left(  \left[  0,1\right)
^{d}\right)  +k\right)  \cap\varphi\left(  \left[  0,1\right)  ^{d}\right)  $
is empty whenever $k$ is a nonzero element of $\mathbb{Z}^{d}.$ Indeed,
suppose that $x,y\in\left[  0,1\right)  ^{d}$ such that $\varphi\left(
x\right)  -\varphi\left(  y\right)  =k\in\mathbb{Z}^{d}.$ This gives the
following system of equations%
\[
\left\{
\begin{array}
[c]{c}%
k_{1}=x_{1}-y_{1}+l_{1}\left(  x_{2},\cdots x_{d}\right)  -l_{1}\left(
y_{2},\cdots,y_{d}\right) \\
k_{2}=x_{2}+l_{2}\left(  x_{3},\cdots x_{d}\right)  -y_{2}-l_{2}\left(
y_{3},\cdots y_{d}\right) \\
\vdots\\
k_{d-1}=x_{d-1}+l_{d-1}\left(  x_{d}\right)  -y_{d-1}-l_{d-1}\left(
y_{d}\right) \\
k_{d}=x_{d}-y_{d}%
\end{array}
\right.  .
\]
Since $x_{d}-y_{d}\in\left(  -1,1\right)  \cap\mathbb{Z}$, it must be the case
that $k_{d}=0.$ This, however, implies that $x_{d}=y_{d}.$ Therefore,
\[
k_{d-1}=x_{d-1}+l_{d-1}\left(  x_{d}\right)  -y_{d-1}-l_{d-1}\left(
x_{d}\right)  =x_{d-1}-y_{d-1}\in\left(  -1,1\right)  \cap\mathbb{Z}%
\]
and $x_{d-1}=y_{d-1}.$ Proceeding inductively, we obtain $x=y$. This implies
that $\varphi\left(  \left[  0,1\right)  ^{d}\right)  $ is a packing set for
${\mathbb{R}}^{d}$ associated with the lattice ${\mathbb{Z}}^{d}$.
Additionally, since $\varphi$ is a measure-preserving map, $m(\varphi\left(
\left[  0,1\right)  ^{d}\right)  )=1$ and this shows that $\varphi\left(
\left[  0,1\right)  ^{d}\right)  )$ tiles ${\mathbb{R}}^{d}$ by ${\mathbb{Z}%
}^{d}.$ \qquad$\Box$

\subsection{Necessary conditions for $d=2$.}

In the subsequent subsection, we will prove that for the special case where
$d=2,$ Theorem \ref{unipotent} is the best result that can be obtained under
the restriction that the Jacobian of $\varphi$ is upper-triangular, $\varphi$
is invertible, $\varphi\left(  \left[  0,1\right)  ^{2}\right)  $ tiles
$\mathbb{R}^{2}$ by $\mathbb{Z}^{2}$ and satisfies some additional technical
restrictions which we shall clarify.

\begin{theorem}
\label{case_2D}Let $\varphi:\mathbb{R}^{2}\rightarrow\mathbb{R}^{2}$ such that
$\varphi\left(  x_{1},x_{2}\right)  =\left(  \varphi_{1}\left(  x_{1}%
,x_{2}\right)  ,\varphi_{2}\left(  x_{1},x_{2}\right)  \right)  $ for some
bivariate, real-valued functions $\varphi_{1}$ and $\varphi_{2}.$ Assuming
additionally that (a) all second-order mixed partial derivatives of
$\varphi_{1},\varphi_{2}$ are continuous (b) $J_{\varphi}\left(  x_{1}%
,x_{2}\right)  $ is an upper-triangular matrix, (c) $\det J_{\varphi}\left(
x_{1},x_{2}\right)  =1,$ then
\[
\varphi\left(  x_{1},x_{2}\right)  =\left(  z\left(  x_{2}\right)
x_{1}+f\left(  x_{2}\right)  ,\int_{1}^{x_{2}}\dfrac{1}{z\left(  t\right)
}dt+K\right)
\]
for some constant $K$ and some functions $z,f\in C^{1}\left(  \mathbb{R}%
\right)  .$ Moreover, if $z$ is taken to be the constant function $1$, then
$\varphi$ is necessarily as described in Theorem \ref{unipotent} and
$E({\mathbb{Z}}^{d},\varphi)$ is an orthonormal basis for $L^{2}\left(
\left[  0,1\right]  ^{2}\right)  .$
\end{theorem}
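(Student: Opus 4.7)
The plan is to extract the form of $\varphi$ directly from the structural constraints (b) and (c), and then reduce the ``moreover'' statement to Theorem \ref{unipotent} via a phase-factor argument. First I would exploit (b): upper triangularity of $J_\varphi$ forces $\partial\varphi_2/\partial x_1\equiv 0$, so $\varphi_2$ depends on $x_2$ alone, say $\varphi_2(x_1,x_2)=g(x_2)$ for some single-variable function $g$. Substituting this into (c) produces
\[
\frac{\partial \varphi_1}{\partial x_1}(x_1,x_2)\cdot g'(x_2)=\det J_\varphi(x_1,x_2)=1,
\]
which in particular shows that $\partial\varphi_1/\partial x_1$ is independent of $x_1$. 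Setting $z(x_2):=1/g'(x_2)$ and integrating with respect to $x_1$ gives $\varphi_1(x_1,x_2)=z(x_2)\,x_1+f(x_2)$ for some function $f$ of $x_2$ alone. Applying the fundamental theorem of calculus to $g$, with $K:=g(1)$ and $g'(t)=1/z(t)$, then recovers the claimed formula.

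Next I would verify that $z,f\in C^{1}(\mathbb{R})$, which is where hypothesis (a) earns its keep. The derivative $z'(x_2)=\partial_{x_2}\partial_{x_1}\varphi_1$ is one of the mixed partials assumed continuous in (a). Setting $x_1=0$ in the formula for $\varphi_1$ identifies $f(x_2)=\varphi_1(0,x_2)$, whose derivative $f'(x_2)=\partial_{x_2}\varphi_1(0,x_2)$ is continuous by the $C^{1}$ regularity of $\varphi_1$ implicit in the hypothesis that $J_\varphi$ is defined. This bookkeeping is the only mildly delicate point of the argument.

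Finally, for the ``moreover'' clause, substituting $z\equiv 1$ produces
\[
\varphi(x_1,x_2)=\bigl(x_1+f(x_2),\,x_2+(K-1)\bigr).
\]
The constant shift $K-1$ in the second coordinate multiplies each exponential $e^{2\pi i k\cdot\varphi(x)}$ by the unimodular factor $e^{2\pi i k_2(K-1)}$, which affects neither the orthogonality relations nor the completeness of $E(\mathbb{Z}^2,\varphi)$ in $L^{2}([0,1]^2)$. It therefore suffices to establish the orthonormal basis property for $\widetilde\varphi(x_1,x_2):=(x_1+f(x_2),\,x_2)$, which is precisely the form treated in Theorem \ref{unipotent} for $d=2$ with $l_1=f$, and that theorem closes the argument. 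I do not foresee a substantive obstacle here: the main content is an ODE-style extraction from the two constraints on $J_\varphi$, combined with a reduction to a result already in hand.
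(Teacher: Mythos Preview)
Your proposal is correct and follows essentially the same route as the paper: the paper packages the derivation of the explicit form of $\varphi$ into a separate lemma, using Clairaut's theorem on $\varphi_2$ to deduce $\partial_{x_1}^2\varphi_1=0$, whereas you read off $\partial_{x_1}\varphi_1=1/g'(x_2)$ directly from the determinant condition---a slightly more direct path to the same conclusion. Your handling of the constant shift $K-1$ via the unimodular phase factor is also a bit more explicit than the paper's, which simply asserts the reduction to Theorem~\ref{unipotent}.
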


\begin{remark}
The assumption that $z$ is equal to the constant function $1$ cannot generally
be removed in Proposition \ref{case_2D} without affecting its conclusion. For
instance if $z\left(  x_{2}\right)  =e^{x_{2}}$ and $f\left(  x_{2}\right)
=0$ then%
\[
\varphi\left(  x_{1},x_{2}\right)  =\left(  e^{x_{2}}x_{1},\sinh\left(
x_{2}\right)  -\cosh\left(  x_{2}\right)  +K\right)  .
\]
for some constant $K.$ Next, the Jacobian \ of $\varphi$ has for determinant
\[
\det\left[
\begin{array}
[c]{cc}%
e^{x_{2}} & x_{1}e^{x_{2}}\\
0 & \cosh\left(  x_{2}\right)  -\sinh\left(  x_{2}\right)
\end{array}
\right]  =\left(  \cosh x_{2}\right)  e^{x_{2}}-\left(  \sinh x_{2}\right)
e^{x_{2}}=1.
\]
Therefore, $\varphi$ is Lebesgue-measure preserving. However, it is easy to
verify that the collection $E({\mathbb{Z}}^{d},\varphi)$ is not an orthonormal
basis for $L^{2}\left(  \left[  0,1\right)  ^{2}\right)  $ since the set
$\varphi\left(  \left[  0,1\right)  ^{2}\right)  \cap\left(  \varphi\left(
\left[  0,1\right)  ^{2}\right)  +\left(  1,0\right)  \right)  $ has a
positive Lebesgue measure in $\mathbb{R}^{2}.$
\end{remark}

Let $\varphi:\mathbb{R}^{2}\rightarrow\mathbb{R}^{2}$ such that $\varphi
\left(  x_{1},x_{2}\right)  =\left(  \varphi_{1}\left(  x_{1},x_{2}\right)
,\varphi_{2}\left(  x_{1},x_{2}\right)  \right)  $ for some bivariate,
real-valued functions $\varphi_{1}$ and $\varphi_{2}.$ Assuming additionally
that all partial derivatives of $\varphi_{1}$ and $\varphi_{2}$ are defined,
and
\[
\left\vert \det J\varphi\right\vert =\left\vert \dfrac{\partial\varphi_{1}%
}{\partial x_{1}}\cdot\dfrac{\partial\varphi_{2}}{\partial x_{2}}%
-\dfrac{\partial\varphi_{2}}{\partial x_{1}}\cdot\dfrac{\partial\varphi_{1}%
}{\partial x_{2}}\right\vert =1,
\]
there does not seem to be a simple way to explicitly described all such
functions. However, we will prove that under the additional assumptions that
$\varphi\left(  x_{1},x_{2}\right)  =\left(  \varphi_{1}\left(  x_{1}%
,x_{2}\right)  ,\varphi_{2}\left(  x_{2}\right)  \right)  $ and all second
order mixed partial derivatives are continuous, $\varphi$ can be described
quite explicitly as stated in the lemma below

\begin{lemma}
\label{tiling}Let $\varphi:\mathbb{R}^{2}\rightarrow\mathbb{R}^{2}$ such that
$\varphi\left(  x_{1},x_{2}\right)  =\left(  \varphi_{1}\left(  x_{1}%
,x_{2}\right)  ,\varphi_{2}\left(  x_{2}\right)  \right)  $ for some
bivariate, real-valued functions $\varphi_{1}$ and $\varphi_{2}.$ Assuming
additionally that all second-order mixed partial derivatives of $\varphi
_{1},\varphi_{2}$ are continuous then the following are equivalent.

\begin{enumerate}
\item $J\varphi\left(  x_{1},x_{2}\right)  $ is an upper-triangular matrix and
$\det\left(  J\varphi\left(  x_{1},x_{2}\right)  \right)  =1$ for all $\left(
x_{1},x_{2}\right)  \in\mathbb{R}^{2}.$

\item There exist differentiable functions $z,f\in C^{1}\left(  \mathbb{R}%
\right)  ,z\neq0$ and some constant $K$ such that $\varphi\left(  x_{1}%
,x_{2}\right)  =\left(  z\left(  x_{2}\right)  x_{1}+f\left(  x_{2}\right)
,\int_{1}^{x_{2}}\frac{1}{z\left(  \tau\right)  }d\tau+K\right)  .$
\end{enumerate}
\end{lemma}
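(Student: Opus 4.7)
The plan is to verify the equivalence by a direct computation in one direction and a short integration argument in the other. Since $\varphi_2$ depends only on $x_2$, we automatically have $\partial_{x_1}\varphi_2 \equiv 0$, so the Jacobian is upper-triangular with diagonal entries $\partial_{x_1}\varphi_1$ and $\varphi_2'$. This observation handles the triangularity for free in both directions, and the real content is to solve the scalar PDE $\partial_{x_1}\varphi_1(x_1,x_2)\cdot \varphi_2'(x_2)=1$ under the stated regularity.

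For the direction (2) $\Longrightarrow$ (1), I would simply compute
\[
\partial_{x_1}\varphi_1 = z(x_2), \qquad \varphi_2'(x_2) = \frac{1}{z(x_2)},
\]
so that the Jacobian has the required upper-triangular form with determinant $z(x_2)\cdot \frac{1}{z(x_2)}=1$. The assumption $z\in C^1$ and $z\neq 0$ makes all the partial derivatives well-defined and continuous.

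For the harder direction (1) $\Longrightarrow$ (2), I would first note that $\det J\varphi=1$ forces $\varphi_2'(x_2)\neq 0$ everywhere, so I can define $z(x_2):=1/\varphi_2'(x_2)$; this is a $C^1$ function of $x_2$ alone (continuity of the mixed partials feeds into this). The determinant condition then reads $\partial_{x_1}\varphi_1(x_1,x_2)=z(x_2)$, a function independent of $x_1$. Integrating in $x_1$ on fibers gives
\[
\varphi_1(x_1,x_2) = z(x_2)\,x_1 + f(x_2),
\]
with $f(x_2):=\varphi_1(0,x_2)$. To see $f\in C^1$, I would use that $\varphi_1$ is $C^1$ (its partials exist and are continuous by the mixed-partials hypothesis), so the restriction $x_2\mapsto \varphi_1(0,x_2)$ is $C^1$. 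Finally, recovering $\varphi_2$ is a single-variable fundamental theorem of calculus: since $\varphi_2'(x_2)=1/z(x_2)$ is continuous, $\varphi_2(x_2)=\int_1^{x_2}\frac{1}{z(\tau)}\,d\tau + K$ with $K=\varphi_2(1)$.

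The proof has no real obstacle; the only point that requires a bit of care is extracting the regularity of $z$ and $f$ from the hypothesis on second-order mixed partial derivatives, and justifying $z\neq 0$ from the determinant condition before dividing by it. Once these are in place, the characterization is just antidifferentiation in each variable separately.
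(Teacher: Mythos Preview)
Your argument is correct and follows essentially the same strategy as the paper: direct computation for (2) $\Rightarrow$ (1), and integration in each variable for (1) $\Rightarrow$ (2). The one noteworthy difference is that the paper invokes Clairaut's theorem to show $\partial_{x_1}\bigl([\partial_{x_1}\varphi_1]^{-1}\bigr)=0$ and hence $\partial_{x_1}^2\varphi_1=0$, whereas you bypass this step entirely by observing that the hypothesis already gives $\varphi_2=\varphi_2(x_2)$, so $\varphi_2'(x_2)$ is manifestly independent of $x_1$ and the determinant condition immediately forces $\partial_{x_1}\varphi_1=1/\varphi_2'(x_2)=:z(x_2)$. Your route is thus a bit more direct; the paper's use of Clairaut would only be needed if one started from the weaker assumption that $J\varphi$ is upper-triangular without knowing in advance that $\varphi_2$ depends on $x_2$ alone.
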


\begin{proof}
To prove that (2) implies (1), we verify that the Jacobian of the map
$\varphi\left(  x_{1},x_{2}\right)  =\left(  z\left(  x_{2}\right)
x_{1}+f\left(  x_{2}\right)  ,\int_{1}^{x_{2}}\frac{1}{z\left(  \tau\right)
}d\tau+K\right)  $ is given by
\[
J\varphi\left(  x_{1},x_{2}\right)  =\left[
\begin{array}
[c]{cc}%
z\left(  x_{2}\right)  & f^{\prime}\left(  x_{2}\right)  +x_{1}z^{\prime
}\left(  x_{2}\right) \\
0 & \dfrac{1}{z\left(  x_{2}\right)  }%
\end{array}
\right]  .
\]
For the converse, assume that $J\varphi\left(  x_{1},x_{2}\right)  $ is an
upper-triangular matrix and $\det\left(  J\varphi\left(  x_{1},x_{2}\right)
\right)  =1$ for all $\left(  x_{1},x_{2}\right)  \in\mathbb{R}^{2}.$ In other
words,%
\[
J\varphi\left(  x,y\right)  =\left[
\begin{array}
[c]{cc}%
\dfrac{\partial\varphi_{1}\left(  x_{1},x_{2}\right)  }{\partial x_{1}} &
\dfrac{\partial\varphi_{1}\left(  x_{1},x_{2}\right)  }{\partial x_{2}}\\
\dfrac{\partial\varphi_{2}\left(  x_{1},x_{2}\right)  }{\partial x_{1}} &
\dfrac{\partial\varphi_{2}\left(  x_{1},x_{2}\right)  }{\partial x_{2}}%
\end{array}
\right]  =\left[
\begin{array}
[c]{cc}%
\dfrac{\partial\varphi_{1}\left(  x_{1},x_{2}\right)  }{\partial x_{1}} &
\dfrac{\partial\varphi_{1}\left(  x_{1},x_{2}\right)  }{\partial x_{2}}\\
0 & \left[  \dfrac{\partial\varphi_{1}\left(  x_{1},x_{2}\right)  }{\partial
x_{1}}\right]  ^{-1}%
\end{array}
\right]  .
\]
By Clairaut's theorem, since all second-order mixed partial derivatives of
$\varphi_{1},\varphi_{2}$ are continuous,
\[
0=\frac{\partial}{\partial x_{2}}\left(  \dfrac{\partial\varphi_{2}\left(
x_{1},x_{2}\right)  }{\partial x_{1}}\right)  =\frac{\partial}{\partial x_{1}%
}\left(  \dfrac{\partial\varphi_{2}\left(  x_{1},x_{2}\right)  }{\partial
x_{2}}\right)  =\frac{\partial}{\partial x_{1}}\left(  \left[  \dfrac
{\partial\varphi_{1}\left(  x_{1},x_{2}\right)  }{\partial x_{1}}\right]
^{-1}\right)
\]
and
\[
-\left[  \dfrac{\partial\varphi_{1}\left(  x_{1},x_{2}\right)  }{\partial
x_{1}}\right]  ^{-2}\cdot\left(  \frac{\partial^{2}\varphi_{1}\left(
x_{1},x_{2}\right)  }{\partial x_{1}^{2}}\right)  =0.
\]
As a result, $\frac{\partial^{2}\varphi_{1}\left(  x_{1},x_{2}\right)
}{\partial x_{1}^{2}}=0$ and this holds if and only if $\varphi_{1}\left(
x_{1},x_{2}\right)  =f\left(  x_{2}\right)  +x_{1}z\left(  x_{2}\right)  $ for
some $z,f\in C^{1}\left(  \mathbb{R}\right)  ,z\neq0.$ On the other hand,
\[
\dfrac{\partial\varphi_{2}\left(  x_{1},x_{2}\right)  }{\partial x_{2}%
}=\left[  \dfrac{\partial\varphi_{1}\left(  x_{1},x_{2}\right)  }{\partial
x_{1}}\right]  ^{-1}\Leftrightarrow\varphi_{2}\left(  x_{1},x_{2}\right)
=\int_{1}^{x_{2}}\frac{1}{z\left(  \tau\right)  }d\tau+K\left(  x_{1}\right)
.
\]
Finally, since
\[
\frac{\partial}{\partial x_{1}}\left(  \varphi_{2}\left(  x_{1},x_{2}\right)
\right)  =\frac{\partial}{\partial x_{1}}\left(  \int_{1}^{x_{2}}\frac
{1}{z\left(  \tau\right)  }d\tau+K\left(  x_{1}\right)  \right)  =K^{\prime
}\left(  x_{1}\right)  =0
\]
$K$ must be a constant quantity.
\end{proof}

\medskip

\noindent\textit{Proof of Theorem \ref{case_2D}.} The first part of the
theorem is proved in Lemma \ref{tiling}. Assume that $z\left(  x_{2}\right)
=1$ for all $x_{2}\in\mathbb{R}$. Then
\[
\varphi\left(  x_{1},x_{2}\right)  =\left(  x_{1}+f\left(  x_{2}\right)
,\int_{1}^{x_{2}}dt+K\right)  =\left(  x_{1}+f\left(  x_{2}\right)
,\ x_{2}-1+K\right)
\]
for some constant $K$ and Theorem \ref{case_2D} is a direct consequence of
Theorem \ref{unipotent}. \qquad$\Box$

\medskip

We remark that the converse of Theorem \ref{case_2D} is generally false, as
shown below.

\begin{proposition}
\label{2D}Under the assumption stated in Theorem \ref{case_2D}, if
$z(x_{2})>1$ on some subset of positive measure in $\left(  0,1\right)  $ then
$\varphi\left(  \left[  0,1\right)  ^{2}\right)  $ does not tile
$\mathbb{R}^{2}$ by $\mathbb{Z}^{2}.$

\end{proposition}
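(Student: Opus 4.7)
The plan is to exploit the explicit form of $\varphi$ given in Theorem \ref{case_2D} and show that whenever $z(x_2)>1$ on a set of positive measure, the sets $\varphi([0,1)^2)$ and $\varphi([0,1)^2)+(1,0)$ must overlap on a set of positive two-dimensional Lebesgue measure. Since tiling by $\mathbb{Z}^2$ requires in particular the packing condition with respect to every nonzero lattice point, this will immediately imply that $\varphi([0,1)^2)$ fails to tile $\mathbb{R}^2$ by $\mathbb{Z}^2$.

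First I would record the key structural feature: by Theorem \ref{case_2D},
\[
\varphi(x_1,x_2) = \bigl(z(x_2)x_1 + f(x_2),\ \varphi_2(x_2)\bigr), \qquad \varphi_2(x_2) = \int_1^{x_2} \frac{1}{z(t)}\,dt + K,
\]
so that the second coordinate of $\varphi$ depends only on $x_2$. Since $\det J_\varphi = 1$ forces $z$ to be nonzero, and $z$ is continuous with $z>1$ somewhere, $z$ must be strictly positive on $[0,1]$. Consequently $\varphi_2'(x_2) = 1/z(x_2) > 0$, so $\varphi_2\colon [0,1)\to \varphi_2([0,1))$ is a $C^1$-diffeomorphism. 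In particular, for each $y_2$ in the image there is a unique $x_2=\varphi_2^{-1}(y_2)\in[0,1)$.

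Next I would compute the horizontal slices. For each such $y_2$, the horizontal slice of $\varphi([0,1)^2)$ at height $y_2$ is exactly the single interval
\[
I(x_2) := \bigl[f(x_2),\ f(x_2)+z(x_2)\bigr), \qquad x_2=\varphi_2^{-1}(y_2),
\]
of length $z(x_2)$. The corresponding horizontal slice of $\varphi([0,1)^2)+(1,0)$ is $I(x_2)+1$. An elementary computation shows that two intervals $[a,a+L)$ and $[a+1,a+1+L)$ intersect in a set of length $\max(L-1,0)$. Hence the slice of $\varphi([0,1)^2) \cap (\varphi([0,1)^2)+(1,0))$ at height $y_2$ has length $\max(z(x_2)-1,\,0)$, which is strictly positive precisely when $z(\varphi_2^{-1}(y_2))>1$.

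Finally I would conclude via Fubini. Let $A=\{x_2\in(0,1): z(x_2)>1\}$; by hypothesis $m(A)>0$. Because $\varphi_2$ is a $C^1$-diffeomorphism on $[0,1)$ with derivative $1/z>0$, the set $\varphi_2(A)$ also has positive one-dimensional Lebesgue measure. Integrating the slice lengths, Fubini gives
\[
m_2\!\bigl(\varphi([0,1)^2)\cap(\varphi([0,1)^2)+(1,0))\bigr) \;\geq\; \int_{\varphi_2(A)} \bigl(z(\varphi_2^{-1}(y_2))-1\bigr)\,dy_2 \;>\;0.
\]
Thus $\varphi([0,1)^2)$ is not even a packing set for $\mathbb{Z}^2$, and so it cannot tile $\mathbb{R}^2$ by $\mathbb{Z}^2$. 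I do not expect a major obstacle here; the only subtlety is ensuring that $z$ is positive (so that $\varphi_2$ is genuinely monotone and the slice description is accurate), which follows from the continuity of $z$ together with $\det J_\varphi = 1$ and the assumption $z>1$ on a set of positive measure.
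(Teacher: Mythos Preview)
Your proof is correct. Both you and the paper show that $\varphi([0,1)^2)$ overlaps a translate of itself by $(1,0)$ on a set of positive measure, but the execution differs. The paper argues pointwise: since $z(x_2)>1$ for some $x_2\in(0,1)$, one can choose $x_1,y_1\in(0,1)$ with $z(x_2)(y_1-x_1)=1$, so that $\varphi(y_1,x_2)=\varphi(x_1,x_2)+(1,0)$; as $\varphi$ is a diffeomorphism on the open square, the intersection $\varphi((0,1)^2)\cap(\varphi((0,1)^2)+(1,0))$ is open and nonempty, hence of positive measure. Your argument instead slices horizontally and integrates via Fubini, which gives the same conclusion together with an explicit lower bound $\int_A (1-1/z(x_2))\,dx_2$ for the overlap measure. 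The paper's route is slightly shorter once one observes that $\varphi((0,1)^2)$ is open; your route is more quantitative and avoids any implicit appeal to openness. Either way, the key preliminary observation---that $z$ is continuous, nonvanishing, and exceeds $1$ somewhere, hence is positive throughout $[0,1]$, so $\varphi_2$ is strictly monotone---is the same.
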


\begin{proof}
Suppose that $z>1$ on some subset of positive measure in $\left(  0,1\right)
.$ Letting $\psi\left(  t,\xi\right)  =z\left(  t\right)  \xi$ be a function
defined on $\left(  0,1\right)  \times\left(  -1,1\right)  ,$ the range of
$\psi$ is given by the set ${\bigcup\limits_{t\in\left(  0,1\right)  }}\left(
z\left(  t\right)  \left(  -1,1\right)  \right)  $ and the set $\left\{
z\left(  t\right)  \left(  -1,1\right)  :t\in\left(  0,1\right)  \right\}
\cap\mathbb{Z}\backslash\left\{  0\right\}  $ is not empty. Next, observe that
for points $\left(  x_{1},x_{2}\right)  ,\left(  y_{1},x_{2}\right)  $
contained in the open set $\left(  0,1\right)  ^{2},$ we have
\[
\varphi\left(  x_{1},x_{2}\right)  =\left(  z\left(  x_{2}\right)
x_{1}+f\left(  x_{2}\right)  ,\int_{1}^{x_{2}}\frac{1}{z\left(  t\right)
}dt+K\right)
\]
and
\[
\varphi\left(  y_{1},x_{2}\right)  =\left(  z\left(  x_{2}\right)
y_{1}+f\left(  x_{2}\right)  ,\int_{1}^{x_{2}}\frac{1}{z\left(  t\right)
}dt+K\right)  .
\]
Taking the difference of the points $\varphi\left(  x_{1},x_{2}\right)  $ and
$\varphi\left(  y_{1},x_{2}\right)  ,$ gives
\[
\varphi\left(  y_{1},x_{2}\right)  -\left(  p\left(  x_{1},x_{2}\right)
\right)  =\left(  z\left(  x_{2}\right)  y_{1}-z\left(  x_{2}\right)
x_{1},0\right)  =\left(  z\left(  x_{2}\right)  \left(  y_{1}-x_{1}\right)
,0\right)  .
\]
Since $y_{1}-x_{1}\in\left(  -1,1\right)  ,$ we may select $x_{2}\in\left(
0,1\right)  $ such that $z\left(  x_{2}\right)  \left(  y_{1}-x_{1}\right)
\in\mathbb{Z}\backslash\left\{  0\right\}  .$ This shows that there exists a
nonzero element $k\in\mathbb{Z}^{2}$ such that the Lebesgue measure of the set
$\left(  \varphi\left(  0,1\right)  ^{2}+k\right)  \cap\varphi\left(  \left(
0,1\right)  ^{2}\right)  $ is strictly positive. This means that
$\varphi\left(  \left[  0,1\right)  ^{2}\right)  $ does not tile
$\mathbb{R}^{2}$ by $\mathbb{Z}^{2}.$
\end{proof}


\section{Applications to the discretization problem of representations of
locally compact groups}

In this section, we provide some additional motivation for our work by making
a connection between Question \ref{question} and the discretization problem of
representations of locally compact groups for the construction of frames and
orthogonal bases
\cite{Gr,grochenig2017orthonormal,oussa2018frames,oussa2017regular,oussa2018framesI,MR809337}.

For a large class of (solvable Lie) groups, the explicit realization of the
action of an infinite-dimensional representation is commonly
described in terms of a system involving exponential functions with phases which are generally non-linear. Since these constructions may not be readily accessible in the literature to
non-specialists, we shall present some examples to motivate the results contained in this section and we will connect them with well-studied systems such as wavelets, Gabor wavelets and shearlets. 

\begin{example}
\label{Heisenberg} (Gabor orthonormal bases and the Heisenberg group) Let
$G=\mathbb{R}^{2}\rtimes\mathbb{R}$ be a semi-direct product group with
multiplication given by
\[
\left(  v,t\right)  \left(  w,s\right)  =\left(  w+\left[
\begin{array}
[c]{cc}%
1 & t_{1}\\
0 & 1
\end{array}
\right]  w,t+s\right)  .
\]
$G$ is the three-dimensional Heisenberg group. It is a non-commutative simply
connected nilpotent (solvable) Lie group. Next, let $\pi$ be a function taking
$G$ into the group of unitary operators acting in $L^{2}\left(  \mathbb{R}%
\right)  $ as follows:
\[
\left[  \pi\left(  v,s\right)  f\right]  \left(  t\right)  =e^{2\pi i\left(
1,-t\right)  \cdot\left(  v_{1},v_{2}\right)  }f\left(  t-s\right)  \text{
\ \ \ \ }\left(  f\in L^{2}\left(  \mathbb{R}\right)  \right)  .
\]
It is not hard to verify that $\pi$ is a continuous group homomorphism. In
fact, $\pi$ is an irreducible representation of the Heisenberg group known as
a Schr\"{o}dinger representation. Since $\mathcal{E}=\left\{  \mathcal{E}%
_{k}:x\mapsto e^{2\pi ixk}\chi_{\left[  0,1\right)  }\left(  x\right)
:k\in\mathbb{Z}\right\}  $ is an orthonormal basis for $L^{2}\left[
0,1\right)  $ and $\left\{  \left[  0,1\right) +k:k\in\mathbb{Z}\right\}  $
is a tiling of $\mathbb{R}$, the collection of vectors $\left\{  e^{2\pi
itk}\chi_{\left[  0,1\right)  }\left(  t-l\right)  :k,l\in\mathbb{Z}\right\}
$ forms an orthonormal basis for $L^{2}\left(  \mathbb{R}\right)  .$ Note that
this basis is obtained from a discrete sampling of the orbit of the indicator
function $\chi_{\left[  0,1\right)  }$ under the action of the representation
$\pi.$ In other words, it is possible to discretize $\pi$ to construct an
orthonormal basis for $L^{2}\left(  \mathbb{R}\right)  .$ This is a standard
example that is commonly encountered in time-frequency analysis \cite{Gr}.
\end{example}

The following example suggests that Example \ref{Heisenberg}, as discussed
above, is just a mere occurrence of a much more general phenomenon.

\begin{example}
\label{Example} (Generalized Gabor wavelets) Let $G=\mathbb{R}^{3}\rtimes\mathbb{R}^{2}$ be a semi-direct
product group with multiplication given by
\[
\left(  v,t\right)  \left(  w,s\right)  =\left(  w+\left[
\begin{array}
[c]{ccc}%
1 & t_{1} & \frac{t_{1}^{2}}{2}+t_{2}\\
0 & 1 & t_{1}\\
0 & 0 & 1
\end{array}
\right]  w,t+s\right)  .
\]
Note that although $G$ and $\mathbb{R}^{3}\times\mathbb{R}^{2}$ share the same
topological structure, their group structures are quite different. More
precisely, $G$ is a non-commutative Lie group \cite{Corwin}; and similarly to
the Heisenberg group, its irreducible representations can be exploited to
construct an orthonormal basis for $L^{2}\left(  \mathbb{R}^{2}\right)  $
\cite[Example 31]{oussa2018framesI}. To see this, let $p:$ $\mathbb{R}%
^{2}\rightarrow\mathbb{R}^{3}$ be a vector-valued polynomial map defined as
follows: $p\left(  t_{1},t_{2}\right)  =\left(  1,-t_{1},-t_{2}+\frac
{t_{1}^{2}}{2}\right)  .$ Note that the third coordinate of $p$ is a bivariate
non-linear polynomial. Next, the function $\pi$ mapping the group $G$ into the
group of unitary operators acting in $L^{2}\left(  \mathbb{R}^{2}\right)  $ as
follows%
\[
\left[  \pi\left(  v,s\right)  f\right]  \left(  t\right)  =e^{2\pi ip\left(
t_{1},t_{2}\right)  \cdot v}f\left(  t-s\right)  \text{ \ \ \ }\left(  f\in
L^{2}\left(  \mathbb{R}^{2}\right)  \right)
\]
can be shown to be an irreducible representation (a continuous homomorphism)
of the group $G$. Moreover, it is easy to verify that the system of
exponentials with non-linear phase
\[
\left\{  \left(  t_{1},t_{2}\right)  \mapsto e^{2\pi ip\left(  t_{1}%
,t_{2}\right)  \cdot\left(  0,k_{1},k_{2}\right)  }\chi_{\left[  0,1\right)
^{2}}\left(  t_{1},t_{2}\right)  :\left(  k_{1},k_{2}\right)  \in
\mathbb{Z}^{2}\right\}
\]
is an orthonormal basis for $L^{2}\left(  \left[  0,1\right)  ^{2}\right)  .$
This observation together with the fact that $\left\{  \left[  0,1\right)
^{2}+k:\left(  k_{1},k_{2}\right)  \in\mathbb{Z}^{2}\right\}  $ is a
measurable partition of $\mathbb{R}^{2}$ imply that the collection of vectors
\[
\left\{  \left(  t_{1},t_{2}\right)  \mapsto e^{2\pi i\left(  -\left(
t_{1}+\ell_{1}\right)  ,-\left(  t_{2}+\ell_{2}\right)  +\frac{\left(
t_{1}+\ell_{1}\right)  ^{2}}{2}\right)  \cdot\left(  k_{1},k_{2}\right)  }%
\chi_{\left[  0,1\right)  ^{2}}\left(  t_{1}+\ell_{1},t_{2}+\ell_{2}\right)
:\left(  k_{1},k_{2},\ell_{1},\ell_{2}\right)  \in\mathbb{Z}^{4}\right\}
\]
is an orthonormal basis for $L^{2}\left(  \mathbb{R}^{2}\right)  .$ In other
words, a suitable discretization of the representation $\pi$ gives an
orthonormal basis for $L^{2}\left(  \mathbb{R}^{2}\right)  .$
\end{example}

\begin{example}
(The ax+b group) Let $G=\mathbb{R}\rtimes\mathbb{R}$ be a semidirect product
group equipped with the group operation $\left(  x,t\right)  \left(
y,s\right)  =\left(  y+e^{t}y,t+s\right) .$ Then $G$ is isomorphic to the ax+b
Lie group which is known to be the group theoretical foundation of wavelet
theory \cite{Ole}. Given a fixed positive real number $\ell$ the unitary representation
$\pi_{\ell}$ of $G$ acting in $L^{2}\left(  \mathbb{R}\right) $ as follows:%
\[
\pi_{\ell}\left(  x,t\right)  f\left(  s\right)  =e^{2\pi ie^{-s}\ell x}f\left(
s-t\right)  ,\text{ }\left(  f\in L^{2}\left(  \mathbb{R}\right)  \right)
\]
is irreducible. Put $\varphi\left(  s\right)  =e^{-s}\ell$ and let $L$ be the
Lebesgue measure on the real line. Since $\varphi$ is injective, and since the
pushforward of the Lebesgue measure via $\varphi$ is a weighted Lebesgue
measure of the form $\varphi_{\ast}L=\frac{1}{x}dx$ on $\left(  0,\infty
\right) ,$ we obtain the following. Given any positive real number $\epsilon$
and for a fixed countable set $\Lambda\subset\mathbb{R}$ such that the lower
Beurling density of $\Lambda$ is positive, and $\{e^{2\pi i \lambda\cdot x}\}_{\lambda\in\Lambda}$ forms a Fourier
frame for $L^{2}(\left[  e^{-\epsilon},e^{\epsilon}\right)),$ the  system $\left\{
\pi_{\ell}\left(  \lambda,0\right)  1_{\left[  -\epsilon,\epsilon\right)
}:\lambda\in\Lambda\right\} $ is a Fourier frame for $L^{2}\left(  \left[
-\epsilon,\epsilon\right)  \right)$ (see Theorem \ref{Fourier_Frame}.) Next, since $\left\{  \left[
-\epsilon,\epsilon\right)  +\kappa:\kappa\in2\epsilon\mathbb{Z}\right\} $
tiles the real line, it follows that
\[
\left\{  \pi_{\ell}\left(  e^{\kappa}\lambda,\kappa\right)  1_{\left[
-\epsilon,\epsilon\right)  }:\lambda\in\Lambda\text{ and }\kappa\in
2\epsilon\mathbb{Z}\right\}
\]
is a frame for $L^{2}\left(  \mathbb{R}\right)  .$
\end{example}

\begin{example}
\label{Example} (A shearlet group, \cite{oussa2018frames,oussa2018framesI}) Let $G=\mathbb{R}^{2}\rtimes
\mathbb{R}^{2}$ be a semi-direct product group with multiplication given by
\[
\left(  v,t\right)  \left(  w,s\right)  =\left(  w+\left[
\begin{array}
[c]{cc}%
e^{t_{1}} & t_{2}e^{t_{1}}\\
0 & e^{t_{1}}%
\end{array}
\right]  w,t+s\right)  .
\]
Let $\varphi:$ $\mathbb{R}^{2}\rightarrow\varphi\left(  \mathbb{R}^{2}\right)
$ be a vector-valued smooth map defined as follows: $\varphi\left(
t_{1},t_{2}\right)  =\left(  e^{-t_{1}},-t_{2}e^{-t_{1}}\right)  .$ Define a
unitary representation $\pi$ of $G$ acting in $L^{2}\left(  \mathbb{R}%
^{2}\right)  $ as follows:%
\[
\left[  \pi\left(  v,s\right)  f\right]  \left(  t\right)  =e^{2\pi
i\varphi\left(  t_{1},t_{2}\right)  \cdot v}f\left(  t-s\right)  \text{
\ \ \ }\left(  f\in L^{2}\left(  \mathbb{R}^{2}\right)  \right)  .
\]
Then it is not difficult to verify that $\pi$ is an irreducible representation
of $G$. Note also that $\varphi$ is injective. In fact, $\varphi$ defines a
diffeomorphism between its domain and its range. Next, let $\Lambda$ be a
countable subset of $\mathbb{R}^{2}$ such that the pushforward of Lebesgue
measure on $\left[  0,1\right)  ^{2}$ to $\varphi\left(  \left[  0,1\right)
^{2}\right)  $ is a frame-spectral measure with frame spectrum $\Lambda.$ Then
the system
\[
\left\{  t\mapsto e^{2\pi i\varphi\left(  t+l\right)  \cdot k}\chi_{\left[
0,1\right)  ^{2}}\left(  t+l\right)  :\left(  k,l\right)  \in\Lambda
\times\mathbb{Z}^{2}\right\}
\]
is a frame for $L^{2}\left(  \mathbb{R}^{2}\right)  .$ 
\end{example}

To generalize the examples above, we will appeal to the results in Theorem
\ref{theorem_basis_frame} to derive some sufficient conditions under which a
class of unitary representations of some connected Lie groups
\cite{grochenig2017orthonormal,oussa2018frames,oussa2018framesI} can be
discretized, for the construction of orthogonal bases and frames in
$L^{2}\left(  \mathbb{R}^{m}\right)  $.

To this end, let $G=\mathbb{R}%
^{d}\rtimes\mathbb{R}^{m}$ be a connected semidirect product group endowed
with the following binary operation: $\left(  x,t\right)  \left(  y,s\right)
=\left(  x+t\bullet y,t+s\right)  $, $\left(  x,t\right)  ,\left(  y,s\right)
\in G$ where
\[
t\bullet y=\exp\left(  \sum_{k=1}^{m}t_{k}A_{k}\right)  y
\]
and $A_{1},\cdots,A_{m}$ is a sequence of pairwise commuting square matrices
of order $d$. 

Let $\pi$ be a representation of $G$ acting unitarily in $L^{2}\left(
\mathbb{R}^{m}\right)  $ as follows. Given $f\in L^{2}\left(  \mathbb{R}%
^{m}\right)  ,$ $\left(  x,t\right)  \in G$ and a fixed vector $\ell
\in\mathbb{R}^{d},$ we define%
\[
\pi\left(  x,t\right)  f\left(  s\right)  =e^{2\pi i (\varphi(s)\cdot x) }f\left(  s-t\right)
\]
where the mapping $\varphi:\mathbb{R}^{m}\longrightarrow{\mathbb{R}}^{d}$
given by $\varphi\left(  t\right)  =\exp\left(  -\sum_{k=1}^{m}t_{k}%
A_{k}\right)  ^{T}\ell$ is a smooth function. Note that for each
$x\in{\mathbb{R}}^{d},$ the operator $\pi(x,e)$ acts by multiplication with an
exponential function with a nonlinear phase (generally) and the conormal part
of $G$ acts by translations. In light of these observations, the action of $G$
in $L^{2}\left(  \mathbb{R}^{m}\right)  $ can be viewed as a form of
generalized time-frequency shift. \newline

A straightforward application of Theorem \ref{theorem_basis_frame}, gives the following.

\begin{proposition}
\label{ONB_Lie_Group} If there exist a countable set $\Lambda\subset{\mathbb{R}}^{d}$ and
$\Omega,\Gamma\subset\mathbb{R}^{m}$ such that (a) $\varphi$ is $\mu$-essentially
injective, (b) the pushforward of the Lebesgue measure on $\mathbb{R}^{m}$
restricted to $\Omega$ is a (frame) spectral measure with (frame) spectrum
$\Lambda,$ (c) $\left\{  \Omega+\gamma:\gamma\in\Gamma\right\}$ tiles $\R^m,$
then the system
\[
\left\{  s\mapsto e^{2\pi i\varphi\left(  s-\gamma\right)  \cdot\lambda}1_{\Omega
}\left(  s-\gamma\right)  :\lambda\in\Lambda,\gamma\in\Gamma\right\}
\]
is an orthogonal basis (or a frame) for 
$L^{2}\left(\R^m\right)  .$
In other words,
\[
\left\{  \pi\left(  \gamma\cdot\lambda,\gamma\right)  1_{\Omega}:\left(
\lambda,\gamma\right)  \in\Lambda\times\Gamma\right\}
\]
is an orthogonal basis (or a frame) for $L^{2}\left(  \mathbb{R}^{m}\right)
.$
\end{proposition}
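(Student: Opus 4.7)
The plan is to combine Theorem \ref{theorem_basis_frame} applied to the slice $L^{2}(\Omega)$ with the tiling hypothesis (c) which produces a direct-sum decomposition of $L^{2}(\mathbb{R}^{m})$. First, I would note that hypothesis (c) implies
\[
L^{2}(\mathbb{R}^{m}) \;=\; \bigoplus_{\gamma\in\Gamma}\, L^{2}(\Omega+\gamma),
\]
the sum being an orthogonal Hilbert-space direct sum since the translates $\Omega+\gamma$ are essentially disjoint and cover $\mathbb{R}^{m}$ up to measure zero. For each $\gamma\in\Gamma$, the translation operator $U_{\gamma}\colon L^{2}(\Omega)\to L^{2}(\Omega+\gamma)$, $(U_{\gamma}h)(s)=h(s-\gamma)$, is unitary, so any orthogonal basis or frame of $L^{2}(\Omega)$ is transported to an orthogonal basis or frame of $L^{2}(\Omega+\gamma)$ with the same bounds.

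Next, I would invoke Theorem \ref{theorem_basis_frame} with $\mu=m_{\Omega}$. Hypotheses (a) and (b) say exactly that $\varphi$ is $\mu$-essentially injective and that $\nu=\varphi_{\ast}m_{\Omega}$ is a spectral (resp.\ frame-spectral) measure with spectrum $\Lambda$. The theorem therefore gives that the system $E(\Lambda,\varphi)$ is an orthogonal basis (resp.\ frame) for $L^{2}(\Omega)$. Applying the unitary $U_{\gamma}$ to each element, the translated system
\[
\bigl\{\,s\mapsto e^{2\pi i\,\varphi(s-\gamma)\cdot\lambda}\,\mathbf{1}_{\Omega}(s-\gamma)\;:\;\lambda\in\Lambda\,\bigr\}
\]
is an orthogonal basis (resp.\ frame) for $L^{2}(\Omega+\gamma)$, with uniform frame bounds across all $\gamma$.

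Because the slices $L^{2}(\Omega+\gamma)$ are mutually orthogonal, merging these individual orthogonal bases over all $\gamma\in\Gamma$ produces an orthogonal basis of $L^{2}(\mathbb{R}^{m})$. In the frame case, the frame-inequality summed over $\gamma$ gives the frame inequality for the full system on $L^{2}(\mathbb{R}^{m})$ with the same bounds, which is the first claim of the proposition.

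Finally, to identify the system with $\{\pi(\gamma\cdot\lambda,\gamma)\mathbf{1}_{\Omega}\}$, I would use that $A_{1},\dots,A_{m}$ commute pairwise, so
\[
\varphi(s-\gamma)=\exp\Bigl(-\sum_{k}(s_{k}-\gamma_{k})A_{k}\Bigr)^{T}\ell=\exp\Bigl(\sum_{k}\gamma_{k}A_{k}\Bigr)^{T}\varphi(s),
\]
which yields $\varphi(s-\gamma)\cdot\lambda=\varphi(s)\cdot(\gamma\bullet\lambda)$, where $\gamma\bullet\lambda$ is the semidirect-product action appearing in the group law. Consequently
\[
\pi(\gamma\bullet\lambda,\gamma)\mathbf{1}_{\Omega}(s)=e^{2\pi i\,\varphi(s)\cdot(\gamma\bullet\lambda)}\,\mathbf{1}_{\Omega}(s-\gamma)=e^{2\pi i\,\varphi(s-\gamma)\cdot\lambda}\,\mathbf{1}_{\Omega}(s-\gamma),
\]
establishing the stated equivalence. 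The only delicate step in the whole argument is keeping track of frame bounds across the direct-sum decomposition, but since all pieces are obtained from $L^{2}(\Omega)$ by unitary translations, the bounds transfer verbatim; the rest is bookkeeping and the commutativity identity above.
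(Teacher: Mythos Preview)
Your proposal is correct and follows essentially the same route as the paper: invoke Theorem~\ref{theorem_basis_frame} on $L^{2}(\Omega)$, transport by the unitary translations $U_{\gamma}$ to each $L^{2}(\Omega+\gamma)$, and assemble via the orthogonal direct-sum decomposition coming from the tiling. The one place you go beyond the paper's own proof is the final paragraph, where you explicitly verify the identification $\pi(\gamma\bullet\lambda,\gamma)\mathbf{1}_{\Omega}(s)=e^{2\pi i\,\varphi(s-\gamma)\cdot\lambda}\mathbf{1}_{\Omega}(s-\gamma)$ using the commutativity of the $A_{k}$; the paper states this ``in other words'' reformulation but does not carry out the check in its proof, so your addition is a genuine (and correct) improvement.
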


\begin{proof}
By assumption, the system $\left\{  s\mapsto e^{2\pi
i\varphi\left(  s\right)  \cdot\lambda}1_{\Omega}\left(  s\right)  :\lambda
\in\Lambda\right\}  $ is an orthogonal basis (or a frame) for $L^{2}\left(  \Omega\right)
.$ Moreover, since each $\pi\left(  \gamma\right)  ,\gamma\in\Gamma$ is a
unitary operator and since the image of an orthogonal basis (or a frame) under
a unitary map is an orthogonal basis (or a frame), it follows that for a fixed
$\gamma\in\Gamma,$
\[
\pi\left(  0,\gamma\right)  \left(  \left\{  s\mapsto e^{2\pi i\varphi\left(
s\right)  \cdot\lambda}1_{\Omega}\left(  s\right)  :\lambda\in\Lambda\right\}
\right)
\]
is an orthogonal basis (or a frame) for $L^{2}\left(  \Omega+\gamma\right)
.$ Finally, since $\left\{  \Omega+\gamma:\gamma\in\Gamma\right\}  $ is a measurable
partition of $\mathbb{R}^{m},$ it follows that
\begin{align*}
&
{\displaystyle\bigcup\limits_{\gamma\in\Gamma}}
\pi\left(  0,\gamma\right)  \left(  \left\{  s\mapsto e^{2\pi i\varphi\left(
s\right)  \cdot\lambda}1_{\Omega}\left(  s\right)  :\lambda\in\Lambda\right\}
\right)  \\
& =\left\{  s\mapsto e^{2\pi i\varphi\left(  s-\gamma\right)  \cdot\lambda}1_{\Omega
}\left(  s-\gamma\right)  :\lambda\in\Lambda,\gamma\in\Gamma\right\}
\end{align*}
is an orthogonal basis (or a frame) for $L^{2}\left(  \mathbb{R}%
^{m}\right)  .$
\end{proof}

\section{Open problems}

This paper provides a systematic study about the generalized exponential
system $E(\Lambda,\varphi)$ forming a frame and basis in some $L^{2}(\mu)$. We
are left with many questions that we have not been able to provide a complete answer.

\begin{enumerate}
\item Given any finite Borel measure $\mu$ on ${\mathbb{R}}^{d}$. Due to the
flexibility of Borel measurable functions $\varphi$, is it true that every
$L^{2}(\mu)$ can admit some $E(\Lambda,\varphi)$ as an orthogonal basis? Can
we find a Borel measure that does not admit any orthogonal basis of non-linear phase?

\medskip

\item The middle-third Cantor measures example given in Section \ref{Fractal}
admits orthogonal basis of exponentials with a non-linear phase function
$\varphi$. They are not differentiable everywhere and is drastically different
from our familiar phase functions $\varphi(x) = x$. A natural question here is
that can we find $\varphi$, defined in an open set containing the support, of
better regularity (e.g., $\varphi$ is $C^{\infty}$) so that $E(\Lambda
,\varphi)$ forms an orthogonal basis? For other open, connected sets, do we
have an exponential orthogonal basis with a non-linear phase?

\medskip

\item Theorem \ref{p1} provides a characterization on ${\mathbb{R}}^{1}$ that
a continuous function can be an orthogonal basis for $L^{2}[0,1]$ with integer
frequencies, provided that $\varphi$ preserves measure zero sets. It looks
like that there may exist a continuous function, which will be highly
irregular, such that $E({\mathbb{Z}},\varphi)$ forms a basis or a frame for
$L^{2}[0,1]$. Will there be any such function? Or can we remove the preserving
measure-zero set assumption in Theorem \ref{p1}?

\medskip

\item Theorem \ref{unipotent} provides a large class of functions that
$E({\mathbb{Z}}^{d},\varphi)$ can form an orthogonal basis for $L^{2}%
[0,1]^{d}$. Are there any other $C^{1}$-functions $\varphi$ not of the form
\[
x\mapsto M\left(  x_{1}+l_{1}\left(  x_{2},\cdots x_{d}\right)  ,x_{2}%
+l_{2}\left(  x_{3},\cdots,x_{d}\right)  ,\cdots,x_{d-1}+l_{d-1}\left(
x_{d}\right)  ,x_{d}\right)
\]
for some $C^{1}$-functions $l_{1},l_{2},\cdots,l_{d-1}$ where $M$ is a matrix
in integer entries satisfying $|\det M|=1\ $for which $E({\mathbb{Z}}%
^{d},\varphi)$ is an orthonormal basis for $L^{2}[0,1]^{d}$?
\end{enumerate}

\bibliographystyle{plain}
\bibliography{frtr}

\end{document}